\definecolor{Red}{cmyk}{0,1,1,0}
\definecolor{verde}{cmyk}{1,0,1,0}
\definecolor{loka}{cmyk}{.5,0,1,.5}
\definecolor{azul}{cmyk}{1,1,0,0}
\numberwithin{equation}{section}
\newcommand{\be}{\begin{equation}}
\newcommand{\ee}{\end{equation}}
\newtheorem{corolario}{Corollary}
\newtheorem{teorema}{Theorem}
\newtheorem{lemma}{Lemma}
\begin{document}
\title{Existence, uniqueness, estimation and continuous dependence of the solutions of a nonlinear integral and an integrodifferential equations of fractional order}
\author{J. Vanterler da C. Sousa$^1$}
\address{$^1$ Department of Applied Mathematics, Institute of Mathematics,
 Statistics and Scientific Computation, University of Campinas --
UNICAMP, rua S\'ergio Buarque de Holanda 651,
13083--859, Campinas SP, Brazil\newline
e-mail: {\itshape \texttt{vanterlermatematico@hotmail.com, capelas@ime.unicamp.br }}}
\author{E. Capelas de Oliveira$^1$}

\begin{abstract} By means of two fractional order integral inequalities we investigate the existence and uniqueness of the solutions of the fractional nonlinear Volterra integral equation and a fractional nonlinear integrodifferential equation in Banach space $C_{\xi}$, using an adequate norm, $||\cdot||_{\xi,\infty}$. We estimate the solutions and investigate their continuous dependence.

\vskip.5cm
\noindent
\emph{Keywords}: Fractional integral equations, fractional integrodifferential, existence and uniqueness, estimative and continuous dependence.
\newline 
MSC 2010 subject classifications. 26A33, 34A08, 34A12, 34A60, 34G20.
\end{abstract}
\maketitle

\section{Introduction}

The study of fractional differential equations is, in fact, very interesting and important for engineering, physics, chemistry, biology and medicine, among others, for its ability to model and describe natural phenomena \cite{HER,SAMKO,KSTJ,IP,ZE4,ZE5}. It is also of paramount importance for mathematics, in particular, for the fractional analysis, because it allows to study the existence and uniqueness of a class of local and non-local solutions, impulsive problems in the Banach space, nonlocal mild solutions, boundary value problems and many others especially where a differential and/or integral and/or integrodifferential equation emerges \cite{esto1,gronwall,almeida,cong,d1,i5,d3,i7,d5,d6,d7,d8,id1,d10,id6,d12,d13}.

On the other hand, using iterative methods to approximate solutions of fractional integral equations and other numerical studies are, in fact, important and interesting for this area \cite{i1,i2,i3,i4}. In addition, it has been investigated the existence of solutions to a fractional order integral equation by Schauder's fixed point and by singular nonlinear Volterra integral equation \cite{i5,i6}. We also mention the importance of studying the existence, as well as the attractiveness of solutions of fractional order integral equations, in Fréchet spaces, besides the asymptotic behavior of solutions \cite{i7,i8}. It can be said that growth in studying existence and uniqueness of solutions of problems, involving fractional integral equations, has become a field of fractional calculus well seen since it allowed to unify several areas, in particular, an area of mathematics that is growing, is the study of solutions of integral equations via fuzzy \cite{i9,i10,i11}. In this sense, some researchers have decided to study solutions of fuzzy fractional integral equations and have been important for the advancement in this area \cite{i9,i10,i11}.

To the best of our knowledge, there are fractional integrodifferential equations not yet sufficiently investigated. However, some authors have investigated the study of existence and uniqueness for boundary and impulse problems in the Banach space, as well as inverse problems in Sobolev's space \cite{id1,id6,id2,id3,id4,id5,id7,id8,id9,id10,id11}. Besides that, we highlight the study of existence and uniqueness of mild solutions in Sobolev space and 
impulse equations with boundary value \cite{id12}. The variety of problems investigated can be it still does not seem sufficient to cover the vast number of papers related to this subject.

In this paper, we consider nonlinear fractional Volterra integral and integrodifferential equations 
\begin{equation}
x(t)=f\left( t,x(t),\dfrac{1}{\Gamma (\alpha )}\int_{a}^{t}\mathcal{W}_{\psi }^{\alpha }\left( t,s,x\left( s\right) \right) \,ds\right)   \label{1.1}
\end{equation}%
and

\begin{equation}\label{1.2}
\left\{ 
\begin{array}{ccl}
^{H}\mathbb{D}_{a^{+}}^{\alpha ,\beta ,\psi }x(t) & = & f\left( t,x(t),\displaystyle\frac{1}{\Gamma (\alpha )}\int_{a}^{t}\mathcal{W}_{\psi }^{\alpha }\left( t,s,x\left( s\right) \right)ds\right)  \\ 
I_{a^{+}}^{1-\gamma ,\psi }x(a) & = & x_{0}
\end{array}%
\right. 
\end{equation}%
receptively, where ${}^{H}\mathbb{D}_{a^{+}}^{\alpha ,\beta ,\psi }(\cdot )$ is the $\psi $-Hilfer fractional derivative, $I_{a^{+}}^{1-\gamma ,\psi }x(\cdot )$ is the $\psi $-Riemann-Liouville fractional integral, with $
0<\alpha \leq 1$, $0\leq \beta \leq 1$, $\gamma =\alpha +\beta (1-\alpha )$, for $-\infty <a\leq t<+\infty $, being $x,f,k$ real vectors with $n$ components such that $k\in C(I^{2}\times \mathbb{R}^{n},\mathbb{R}^{n})$ for 
$a\leq s\leq t<+\infty $, $f\in C(I\times \mathbb{R}^{n}\times \mathbb{R}^{n},\mathbb{R}^{n})$, and to simplify notation $\mathcal{W}_{\psi }^{\alpha }\left( t,s,x\left( s\right) \right) :=N_{\psi }^{\alpha }\left( t,s\right) k\left(
t,s,x\left( s\right) \right) $ with $N_{\psi }^{\alpha }\left( t,s\right)=\psi ^{\prime }\left( s\right) \left( \psi \left( t\right) -\psi \left(s\right) \right) ^{\alpha -1}$ and $\psi ^{\prime }\left( s\right) =\dfrac{d}{
ds}\psi \left( s\right) $ denoting ordinary derivative.

The main aim of this article is to present an analytical study, that is, existence, uniqueness, solution estimate and continuous dependence of 
solutions of the nonlinear fractional integral equation Eq.(\ref{1.1}) and the nonlinear fractional integrodifferential equation Eq.(\ref{1.2}) in the field of the $\psi$-Hilfer fractional derivative in the Banach space by means of two suitable lemmas.

The paper is organized as follows: in section 2, we present the definitions of $\psi$-Riemann-Liouville fractional integral and $\psi$-Hilfer fractional 
derivative, as well as results relating both and the calculation of the fractional integral of a Mittag-Leffler function. We introduce a norm and the f
undamental metric for the elaboration of this article and two results involving the metric (complete space) and the norm (Banach space), as well as 
discussing particular cases. We also present two lemmas and a corollary involving inequalities that are important for the development of the work. 
In section 3, we investigate the existence and uniqueness of solutions of the nonlinear fractional Volterra integral equation and of the nonlinear fractional 
integrodifferential equation, as well as the study of the solution estimate. Section 4 is intended for the study of the 
continuous dependence of the solutions of the nonlinear fractional Volterra integral equations and of the nonlinear fractional integrodifferential equation. Concluding remarks close the paper.


\section{Preliminaries}

In this section we will introduce some preliminary results that will be useful in next sections. Specifically, we will recover some results involving the Riemann-Liouville fractional integral of a function with respect to another function; the $\psi$-Hilfer fractional derivative and the definition of the classical (one parameter) Mittag-Leffler function.

Let $\mathbb{R}^{+} = [0,+\infty)$ be the set of all non-negative real numbers and $\mathbb{R}^n$ be the $n$-dimensional Euclidean space endowed with a norm $\left\Vert \cdot\right\Vert$ and $C(S_1,S_2)$ denotes the class of continuous functions from the set $S_1$ to the set $S_2$.

For any $[a,b] \subset [a,\infty)=I$, let $C([a,b],\mathbb{R}^n)$ be the space of continuous functions $x:[a,b]\to \mathbb{R}^n$ with the sup norm $\left\Vert \cdot\right\Vert_{\infty}$ given by \cite{ZE1}
\begin{equation*}
\left\Vert x\right\Vert _{\infty }:={\smallskip }\underset{{t\in \lbrack a,b] }}{{\sup }}\left\Vert x(t)\right\Vert ,\quad \forall x\in C([a,b],\mathbb{R}^{n}).
\end{equation*}

Let $\alpha >0$, $[a,b]\in \mathbb{R}$ and $\psi (t)$ be an increasing and positive monotone function on $(a,b]$, having a continuous derivative $\psi ^{\prime }(t)$ on $[a,b]$. The Riemann-Liouville fractional integral of a function $f$ with respect to another function $\psi $ on $[a,b]$ is defined by \cite{ZE1,ZE3}
\begin{equation}
I_{a^{+}}^{\alpha ,\psi }x(t)=\frac{1}{\Gamma (\alpha )}\int_{a}^{t}\psi ^{\prime }\left( s\right) \left( \psi \left( t\right) -\psi \left( s\right) \right) ^{\alpha -1}x(s)\,ds
\end{equation}%
where $\Gamma (\cdot )$ is the gamma function, with $0<\alpha \leq 1$.

On the other hand, let $n-1<\alpha <n$ with $n\in \mathbb{N}$, $J=\left[ a,b\right] $ be an interval such that $-\infty \leq a<b\leq +\infty $ and let $f,\psi \in C^{n}([a,b],\mathbb{R})$ be two functions such that $\psi $ is
increasing and $\psi ^{\prime }(t)\neq 0$, for all $t\in J$. The $\psi $-Hilfer fractional derivative denoted by ${}^{H}\mathbb{D}_{a^{+}}^{\alpha,\beta ,\psi }(\cdot )$ of a function $f$ of order $\alpha $ and type $\beta 
$ $\left( 0\leq \beta \leq 1\right) $, is defined by \cite{ZE1,ZE3}
\begin{equation}
{}^{H}\mathbb{D}_{a^{+}}^{\alpha ,\beta ,\psi }x(t)=I_{a^{+}}^{\beta (n-\alpha ),\psi }\left( \frac{1}{\psi ^{\prime }(t)}\frac{d}{dt}\right) ^{n}I_{a^{+}}^{(1-\beta )(n-\alpha ),\psi }x(t).
\end{equation}

The $\psi $-Hilfer fractional derivative of an $n$-dimensional vector function denoted by $x(t)=(x_{1}(t),\cdots ,x_{n}(t))^{T}$, with the superscript $T$ indicates transposition, is defined component wise as 
\begin{equation*}
{}^{H}\mathbb{D}_{a^{+}}^{\alpha ,\beta ;\psi }x(t):=\left( {}^{H}\mathbb{D}_{a^{+}}^{\alpha ,\beta ;\psi }x_{1}(t),\cdots ,{}^{H}\mathbb{D}_{a^{+}}^{\alpha ,\beta ;\psi }x_{n}(t)\right) ^{T}.
\end{equation*}

\begin{teorema}\label{t1} Let $f\in C^{1}\left( J\right) ;$ $0<\alpha \leq 1$ and $0\leq
\beta \leq 1$, we have 
\begin{equation*}
^{H}\mathbb{D}_{a^{+}}^{\alpha ,\beta ;\psi }I_{a^{+}}^{\alpha ,\psi }x\left( t\right) =x\left( t\right) .
\end{equation*}
\end{teorema}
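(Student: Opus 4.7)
Since $0<\alpha\le 1$, I would take $n=1$ in the definition of the $\psi$-Hilfer derivative. The plan is to unfold
\[
{}^{H}\mathbb{D}_{a^{+}}^{\alpha,\beta;\psi}\,I_{a^{+}}^{\alpha,\psi}x(t)=I_{a^{+}}^{\beta(1-\alpha),\psi}\Bigl(\tfrac{1}{\psi'(t)}\tfrac{d}{dt}\Bigr)I_{a^{+}}^{(1-\beta)(1-\alpha),\psi}I_{a^{+}}^{\alpha,\psi}x(t),
\]
collapse the two inner $\psi$-Riemann--Liouville integrals via the semigroup property $I_{a^{+}}^{\mu,\psi}I_{a^{+}}^{\nu,\psi}=I_{a^{+}}^{\mu+\nu,\psi}$, and then recognize what remains as a $\psi$-Riemann--Liouville fractional derivative composed with its integral of the same order.

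The arithmetic $(1-\beta)(1-\alpha)+\alpha=1-\beta(1-\alpha)$ shows that the semigroup step reduces the inner pair to $I_{a^{+}}^{1-\beta(1-\alpha),\psi}x(t)$. Writing $\gamma=\beta(1-\alpha)\in[0,1)$ and using the definition of the $\psi$-Riemann--Liouville fractional derivative $D_{a^{+}}^{\gamma,\psi}=\bigl(\tfrac{1}{\psi'(t)}\tfrac{d}{dt}\bigr)I_{a^{+}}^{1-\gamma,\psi}$, the middle operator produces
\[
\Bigl(\tfrac{1}{\psi'(t)}\tfrac{d}{dt}\Bigr)I_{a^{+}}^{1-\beta(1-\alpha),\psi}x(t)=D_{a^{+}}^{\beta(1-\alpha),\psi}x(t),
\]
so the full expression becomes $I_{a^{+}}^{\beta(1-\alpha),\psi}\,D_{a^{+}}^{\beta(1-\alpha),\psi}x(t)$.

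I would finish by invoking the left-inverse identity $I_{a^{+}}^{\gamma,\psi}D_{a^{+}}^{\gamma,\psi}x=x$ for $\gamma\in(0,1)$, which holds under the hypothesis $x\in C^{1}$: the only potential obstruction is the boundary term $(\psi(t)-\psi(a))^{\gamma-1}(I_{a^{+}}^{1-\gamma,\psi}x)(a)/\Gamma(\gamma)$, and it vanishes because boundedness of $x$ near $a$ forces $\lim_{t\to a^{+}}I_{a^{+}}^{1-\gamma,\psi}x(t)=0$. This is really the only delicate point of the argument and is precisely where the regularity hypothesis is used. The degenerate subcases $\beta=0$ and $\alpha=1$ both yield $\beta(1-\alpha)=0$, and the identity collapses to the elementary $\bigl(\tfrac{1}{\psi'(t)}\tfrac{d}{dt}\bigr)I_{a^{+}}^{1,\psi}x(t)=x(t)$, which is the fundamental theorem of calculus.
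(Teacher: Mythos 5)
Your argument is correct and is the standard proof of this composition rule: take $n=1$, use the semigroup property to collapse $I_{a^{+}}^{(1-\beta)(1-\alpha),\psi}I_{a^{+}}^{\alpha,\psi}$ into $I_{a^{+}}^{1-\beta(1-\alpha),\psi}$, recognize the resulting middle block as the $\psi$-Riemann--Liouville derivative of order $\beta(1-\alpha)$, and kill the boundary term $(\psi(t)-\psi(a))^{\beta(1-\alpha)-1}\,(I_{a^{+}}^{1-\beta(1-\alpha),\psi}x)(a)/\Gamma(\beta(1-\alpha))$ using boundedness of $x$ near $a$. The paper offers no proof of its own here --- it simply cites an external reference --- so there is nothing to compare against beyond noting that your derivation is the one that reference uses; the only cosmetic point is that your local symbol $\gamma=\beta(1-\alpha)$ clashes with the paper's global $\gamma=\alpha+\beta(1-\alpha)$, so it would be safer to rename it.
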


\begin{proof}
See \cite{ZE1}.
\end{proof}

\begin{teorema}\label{t2} If  $f\in C^{n}\left( J\right) ,$  $0<\alpha \leq 1$ and $0\leq \beta \leq 1,$ then
\begin{equation*}
I_{a^{+}}^{\alpha ,\psi }\text{ }^{H}\mathbb{D}_{a^{+}}^{\alpha ,\beta ;\psi }x\left( t\right) =x\left( t\right) -\frac{\left( \psi \left( x\right) -\psi \left( a\right) \right) ^{\gamma -1}}{\Gamma \left( \gamma \right) }%
I^{\left( 1-\beta \right) \left( 1-\alpha \right) ;\psi }x\left( a\right).
\end{equation*}
\end{teorema}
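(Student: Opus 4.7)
The plan is to reduce the left-hand side to an operator of Riemann--Liouville type of order $\gamma$ and then apply a known inversion identity. Since $0<\alpha\le 1$ we have $n=1$, so the definition of the $\psi$-Hilfer derivative gives
\[
{}^{H}\mathbb{D}_{a^{+}}^{\alpha,\beta;\psi}x(t) = I_{a^{+}}^{\beta(1-\alpha),\psi}\left(\frac{1}{\psi'(t)}\frac{d}{dt}\right)I_{a^{+}}^{(1-\beta)(1-\alpha),\psi}x(t).
\]
Applying $I_{a^{+}}^{\alpha,\psi}$ to both sides and using the semigroup property of the $\psi$-Riemann--Liouville integral, $I_{a^{+}}^{\alpha,\psi}\,I_{a^{+}}^{\beta(1-\alpha),\psi} = I_{a^{+}}^{\alpha+\beta(1-\alpha),\psi} = I_{a^{+}}^{\gamma,\psi}$, I would collapse the two outermost integrals into a single one.

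Next, I would use the algebraic identity $1-\gamma = (1-\alpha)(1-\beta)$ to recognize that the operator $\bigl(\tfrac{1}{\psi'(t)}\tfrac{d}{dt}\bigr)\,I_{a^{+}}^{(1-\beta)(1-\alpha),\psi}$ is precisely the $\psi$-Riemann--Liouville fractional derivative of order $\gamma$, call it $\mathbb{D}_{a^{+}}^{\gamma,\psi}$. Thus the expression reduces to
\[
I_{a^{+}}^{\alpha,\psi}\,{}^{H}\mathbb{D}_{a^{+}}^{\alpha,\beta;\psi}x(t) = I_{a^{+}}^{\gamma,\psi}\,\mathbb{D}_{a^{+}}^{\gamma,\psi}x(t).
\]

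The final step is the standard $\psi$-Riemann--Liouville inversion identity
\[
I_{a^{+}}^{\gamma,\psi}\,\mathbb{D}_{a^{+}}^{\gamma,\psi}x(t) = x(t) - \frac{(\psi(t)-\psi(a))^{\gamma-1}}{\Gamma(\gamma)}\,I_{a^{+}}^{1-\gamma,\psi}x(a),
\]
valid because the regularity hypothesis $x\in C^{n}(J)$ ensures that $I_{a^+}^{1-\gamma,\psi}x$ is absolutely continuous and the boundary term at $a$ makes sense. Substituting $1-\gamma = (1-\beta)(1-\alpha)$ in the boundary term yields exactly the claimed formula.

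The main obstacle is rigorously justifying the two ingredients used: the semigroup property $I^{\mu,\psi}_{a^+}I^{\nu,\psi}_{a^+}=I^{\mu+\nu,\psi}_{a^+}$ and the inversion identity $I^{\gamma,\psi}_{a^+}\mathbb{D}^{\gamma,\psi}_{a^+}x(t)=x(t)-\frac{(\psi(t)-\psi(a))^{\gamma-1}}{\Gamma(\gamma)}I^{1-\gamma,\psi}_{a^+}x(a)$. Both are classical for the $\psi$-Riemann--Liouville calculus, but they require that the relevant integrals converge and that $I^{1-\gamma,\psi}_{a^+}x$ be differentiable; under $x\in C^{n}(J)$ with the prescribed ranges of $\alpha,\beta$ these conditions hold, so the argument goes through. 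Everything else is bookkeeping of the exponent identity $\gamma=\alpha+\beta(1-\alpha)$ and $1-\gamma=(1-\alpha)(1-\beta)$.
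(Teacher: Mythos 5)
The paper offers no proof of this theorem at all --- it simply defers to the reference \cite{ZE1} --- and your argument is precisely the standard one given there: with $n=1$, collapse $I_{a^{+}}^{\alpha,\psi}I_{a^{+}}^{\beta(1-\alpha),\psi}$ into $I_{a^{+}}^{\gamma,\psi}$ by the semigroup property, recognize the remaining composition as $I_{a^{+}}^{\gamma,\psi}\mathbb{D}_{a^{+}}^{\gamma,\psi}$ via $1-\gamma=(1-\alpha)(1-\beta)$, and finish with the $\psi$-Riemann--Liouville inversion identity. Your proposal is correct and takes essentially the same route; the only cosmetic remark is that the $\psi(x)$ appearing in the paper's statement is a typo for $\psi(t)$, which your final formula silently (and correctly) fixes.
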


\begin{proof}
See \cite{ZE1}.
\end{proof}

Let $\xi >0$ be a constant and consider the special space $C_{\xi }(I,\mathbb{R}^{n})$ the set of all continuous functions $x\in C(I,\mathbb{R}^{n})$ such that 
\begin{equation*}
\underset{t\in I}{\sup }\frac{\left\Vert x(t)\right\Vert }{\mathbb{E}_{\alpha }\left[ \xi (\psi (t)-\psi (a))^{\alpha }\right] }<\infty 
\end{equation*}%
where $\mathbb{E}_{\alpha }:\mathbb{R}\rightarrow \mathbb{R}$ is the one-parameter Mittag-Leffler function which is given by 
\begin{equation*}
\mathbb{E}_{\alpha }(z)=\sum_{k=0}^{\infty }\frac{z^{k}}{\Gamma (\alpha k+1)},\qquad \forall z\in \mathbb{R}.
\end{equation*}

We couple the linear space $C_{\xi }(I,\mathbb{R}^{n})$ with a suitable metric namely 
\begin{equation}\label{AA1}
{d}_{\xi ,\infty }(x,y):=\underset{t\in I}{\sup }\frac{\left\Vert x(t)-y(t)\right\Vert }{\mathbb{E}_{\alpha }\left[ \xi (\psi (t)-\psi (a))^{\alpha }\right] }<\infty   
\end{equation}
with a norm defined by 
\begin{equation}\label{AA2}
\left\Vert x\right\Vert _{\xi ,\infty }=\underset{t\in I}{\sup }\frac{\left\Vert x(t)\right\Vert }{\mathbb{E}_{\alpha }\left[ \xi (\psi (t)-\psi(a))^{\alpha }\right] }.  
\end{equation}

Note that the metric and norm as seen in Eq.(\ref{AA1}) and Eq.(\ref{AA2}) are in fact an extension of a class of metrics and norms, that is, taking $\psi \left(t\right)=t$ in Eq.(\ref{AA1}) and Eq.(\ref{AA2}), we have
\cite{cong}:
\begin{equation}
{d}_{\xi ,\infty }(x,y):=\underset{t\in I}{\sup }\frac{\left\Vert x(t)-y\left( t\right) \right\Vert }{\mathbb{E}_{\alpha }[\xi \left( t-a\right) ^{\alpha }]}<\infty 
\end{equation}%
with a norm defined by 
\begin{equation}
\left\Vert x\right\Vert _{\xi ,\infty }=\underset{t\in I}{\sup }\frac{\left\Vert x(t)\right\Vert }{\mathbb{E}_{\alpha }[\xi \left( t-a\right)^{\alpha }]}.
\end{equation}

On the other hand, taking $\psi \left( t\right) =t$  and applying limit $\alpha \rightarrow 1$ in the Eq.(\ref{AA1}) and Eq.(\ref{AA2}), we have \cite{pach}
\begin{equation}
{d}_{\xi ,\infty }(x,y):=\underset{t\in I}{\sup }\frac{\left\Vert x(t)-y\left( t\right) \right\Vert }{\exp [\xi \left( t-a\right) ]}<\infty 
\end{equation}
with a norm defined by 
\begin{equation}
\left\Vert x\right\Vert _{\xi ,\infty }=\underset{t\in I}{\sup }\frac{\left\Vert x(t)\right\Vert }{\exp [\xi \left( t-a\right) ]}.
\end{equation}

The definitions given above Eq.(\ref{AA1}) and Eq.(\ref{AA2}) are variants of the metric and norm. When we get the particular case, we note that the respective metrics and norms are variations of norms and metrics of Bielecki \cite{pach}. It may be noted that it is possible to obtain other variances, since the freedom of choice of $\psi$ functions and the limits of $\alpha$ and $\beta$, allows a great advantage in the best metric and norm, in which want to work.

\begin{lemma} {\rm \cite{almeida}} Given $\xi >0$, $n-1<\alpha <n$ with $n\in \mathbb{N}$. Consider the real function $f(t)=\mathbb{E}_{\alpha }\left[ \xi (\psi (t)-\psi (a))^{\alpha }\right] $ where $\mathbb{E}_{\alpha }(\cdot )$ is an one-parameter Mittag-Leffler function. Then 
\begin{equation}
I_{a^{+}}^{\alpha ,\psi }f(t)=\frac{1}{\xi }\left( \mathbb{E}_{\alpha }\left[\xi (\psi (t)-\psi (a))^{\alpha }\right] -1\right) .
\end{equation}
\end{lemma}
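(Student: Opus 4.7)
The plan is to expand the Mittag-Leffler function into its defining power series, apply the $\psi$-Riemann-Liouville fractional integral term by term, and then re-sum. Since $\mathbb{E}_{\alpha}(z)=\sum_{k\ge 0} z^{k}/\Gamma(\alpha k+1)$, one has
\begin{equation*}
f(t)=\sum_{k=0}^{\infty}\frac{\xi^{k}(\psi(t)-\psi(a))^{\alpha k}}{\Gamma(\alpha k+1)}.
\end{equation*}
The series is locally uniformly convergent on any compact subinterval of $I$ (the Mittag-Leffler function is entire), so interchanging $I_{a^{+}}^{\alpha,\psi}$ with $\sum$ is legitimate.

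Next, I would invoke the well-known power rule for the $\psi$-Riemann-Liouville integral, namely
\begin{equation*}
I_{a^{+}}^{\alpha,\psi}(\psi(t)-\psi(a))^{\mu}=\frac{\Gamma(\mu+1)}{\Gamma(\mu+\alpha+1)}(\psi(t)-\psi(a))^{\mu+\alpha},\qquad \mu>-1,
\end{equation*}
which follows from the substitution $u=(\psi(s)-\psi(a))/(\psi(t)-\psi(a))$ and the Beta integral. Applying this with $\mu=\alpha k$ term by term gives
\begin{equation*}
I_{a^{+}}^{\alpha,\psi}f(t)=\sum_{k=0}^{\infty}\frac{\xi^{k}}{\Gamma(\alpha k+1)}\cdot\frac{\Gamma(\alpha k+1)}{\Gamma(\alpha(k+1)+1)}(\psi(t)-\psi(a))^{\alpha(k+1)}=\sum_{k=0}^{\infty}\frac{\xi^{k}(\psi(t)-\psi(a))^{\alpha(k+1)}}{\Gamma(\alpha(k+1)+1)}.
\end{equation*}

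Finally, I would factor a $1/\xi$ out front and reindex by $j=k+1$, which yields
\begin{equation*}
I_{a^{+}}^{\alpha,\psi}f(t)=\frac{1}{\xi}\sum_{j=1}^{\infty}\frac{\xi^{j}(\psi(t)-\psi(a))^{\alpha j}}{\Gamma(\alpha j+1)}=\frac{1}{\xi}\Bigl(\mathbb{E}_{\alpha}\bigl[\xi(\psi(t)-\psi(a))^{\alpha}\bigr]-1\Bigr),
\end{equation*}
which is the claimed identity. The only genuinely non-routine step is the justification of the termwise integration; the cleanest way is to note that on any compact subinterval $[a,b]$ the partial sums converge uniformly (by Weierstrass $M$-test, bounded by $\mathbb{E}_{\alpha}[|\xi|(\psi(b)-\psi(a))^{\alpha}]$), and $I_{a^{+}}^{\alpha,\psi}$ is a bounded linear operator on $C([a,b],\mathbb{R})$, so the interchange of $\sum$ and $I_{a^{+}}^{\alpha,\psi}$ is immediate. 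Everything else is bookkeeping with the Gamma-function identities.
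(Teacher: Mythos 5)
Your proof is correct. The paper gives no proof of this lemma at all --- it simply cites \cite{almeida} --- and your argument (expand $\mathbb{E}_{\alpha}$ as a power series, apply the $\psi$-power rule $I_{a^{+}}^{\alpha,\psi}(\psi(t)-\psi(a))^{\mu}=\frac{\Gamma(\mu+1)}{\Gamma(\mu+\alpha+1)}(\psi(t)-\psi(a))^{\mu+\alpha}$ term by term, reindex) is exactly the standard derivation one finds in that reference; the interchange of sum and integral is indeed harmless here, and since $\xi>0$ makes every term nonnegative you could even fall back on monotone convergence if you wanted to avoid invoking boundedness of $I_{a^{+}}^{\alpha,\psi}$ on $C([a,b],\mathbb{R})$.
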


\begin{lemma} {\rm \cite{cong}} If $\xi >0$ is a constant, then
\begin{enumerate}
\item ${d}_{\xi ,\infty }$ is a metric;
\item $\left( C_{\xi }\left( I,\mathbb{R}^{n}\right) ,d_{\xi,\infty }\right) $ is a complete metric space.
\end{enumerate}
\end{lemma}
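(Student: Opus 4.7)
The plan is to verify the metric axioms for part (1) directly from the corresponding properties of the Euclidean norm, then to establish completeness in part (2) by a standard ``localization'' argument, exploiting the fact that the Mittag-Leffler weight is bounded on every compact sub-interval.

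For part (1), note first that $\mathbb{E}_{\alpha}[\xi(\psi(t)-\psi(a))^{\alpha}] \geq 1 > 0$ (its series has only non-negative terms starting with $1$), so each ratio in the supremum is well-defined and non-negative. Non-negativity and definiteness follow from the corresponding properties of $\|\cdot\|$ on $\mathbb{R}^n$: if $d_{\xi,\infty}(x,y)=0$ then $\|x(t)-y(t)\|=0$ for every $t\in I$, whence $x\equiv y$. Symmetry is immediate. For the triangle inequality I would apply the pointwise triangle inequality inside the supremum, divide by the common positive factor $\mathbb{E}_\alpha[\cdot]$, and use that $\sup(f+g)\leq \sup f+\sup g$.

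For part (2), let $(x_n)\subset C_\xi(I,\mathbb{R}^n)$ be $d_{\xi,\infty}$-Cauchy. For any $[a,b]\subset I$ set
\begin{equation*}
M_{b}:=\sup_{t\in[a,b]} \mathbb{E}_\alpha\!\left[\xi(\psi(t)-\psi(a))^\alpha\right],
\end{equation*}
which is finite because $\psi$ is continuous and the Mittag-Leffler function is continuous. Then for all $t\in[a,b]$,
\begin{equation*}
\|x_n(t)-x_m(t)\|\leq M_b\, d_{\xi,\infty}(x_n,x_m),
\end{equation*}
so $(x_n)$ is Cauchy in $(C([a,b],\mathbb{R}^n),\|\cdot\|_\infty)$, which is complete. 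Therefore $x_n$ converges uniformly on $[a,b]$ to some continuous $x_{[a,b]}:[a,b]\to\mathbb{R}^n$. Exhausting $I$ by an increasing sequence of such intervals (e.g.\ $[a,a+k]\cap I$) and noting the limits agree on overlaps, I obtain a single continuous function $x:I\to\mathbb{R}^n$ such that $x_n(t)\to x(t)$ for every $t\in I$.

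Two things remain: show $x\in C_\xi(I,\mathbb{R}^n)$ and show $d_{\xi,\infty}(x_n,x)\to 0$. Since $(x_n)$ is Cauchy it is $d_{\xi,\infty}$-bounded, so there exists $K>0$ with $\|x_n\|_{\xi,\infty}\leq K$ for all $n$; passing $n\to\infty$ pointwise in the inequality $\|x_n(t)\|\leq K\,\mathbb{E}_\alpha[\xi(\psi(t)-\psi(a))^\alpha]$ yields $\|x\|_{\xi,\infty}\leq K$, so $x\in C_\xi$. For the convergence, fix $\varepsilon>0$ and choose $N$ with $d_{\xi,\infty}(x_n,x_m)<\varepsilon$ for all $n,m\geq N$; for each fixed $t\in I$ and $n\geq N$, letting $m\to\infty$ in
\begin{equation*}
\frac{\|x_n(t)-x_m(t)\|}{\mathbb{E}_\alpha[\xi(\psi(t)-\psi(a))^\alpha]}<\varepsilon
\end{equation*}
gives $\|x_n(t)-x(t)\|/\mathbb{E}_\alpha[\cdot]\leq\varepsilon$, and taking the supremum over $t$ yields $d_{\xi,\infty}(x_n,x)\leq\varepsilon$. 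The main obstacle I anticipate is the membership $x\in C_\xi$, since a pointwise limit of bounded functions need not be bounded in general; this is resolved by using the \emph{uniform} bound $K$ for the whole Cauchy sequence in the weighted norm, rather than any local estimate.
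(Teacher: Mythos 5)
Your proof is correct. Note that the paper itself does not prove this lemma at all --- it is stated with a citation to the reference [cong] and no argument is given --- so there is no in-paper proof to compare against; your argument (metric axioms from those of $\|\cdot\|$ on $\mathbb{R}^n$ using that the Mittag-Leffler weight is $\geq 1$, then completeness via uniform Cauchyness on compact subintervals, gluing the local limits, and recovering membership in $C_{\xi}$ from the uniform bound on $\|x_n\|_{\xi,\infty}$) is the standard one for weighted sup-norm (Bielecki-type) spaces and correctly fills that gap.
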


\begin{lemma} {\rm \cite{cong}} If $\xi >0$ is a constant, then
\begin{enumerate}
\item $\left\Vert  \cdot  \right\Vert _{\xi ,\infty }$ is a norm;
\item $\left( C_{\xi }\left( I,\mathbb{R}^{n}\right) ,\left\Vert  \cdot \right\Vert _{\xi ,\infty }\right) $ is a Banach space.
\end{enumerate}
\end{lemma}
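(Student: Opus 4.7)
The plan is to verify the three norm axioms directly, and then obtain completeness by a standard Cauchy-sequence argument in which the weight $w(t) := \mathbb{E}_{\alpha}[\xi(\psi(t)-\psi(a))^{\alpha}]$ plays the role of a strictly positive continuous modulus.

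For part (1) I would simply check the axioms pointwise. Since $w(t) > 0$ for every $t \in I$, nonnegativity is immediate, and $\|x\|_{\xi,\infty} = 0$ forces $\|x(t)\| = 0$ for all $t$, hence $x \equiv 0$; conversely $x \equiv 0$ gives $\|x\|_{\xi,\infty} = 0$. Homogeneity follows from $\|\lambda x(t)\| = |\lambda|\,\|x(t)\|$ together with the linearity of the sup in scalar multiples, and the triangle inequality comes from applying the triangle inequality of $\|\cdot\|$ on $\mathbb{R}^n$ at each $t$, dividing by $w(t)$, and taking suprema. Each of these steps is routine.

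For part (2) I would take an arbitrary Cauchy sequence $\{x_n\} \subset C_{\xi}(I,\mathbb{R}^n)$ and construct a candidate limit. From the inequality $\|x_n(t)-x_m(t)\| \le w(t)\,\|x_n-x_m\|_{\xi,\infty}$, which is valid for every fixed $t \in I$, I deduce that $\{x_n(t)\}$ is Cauchy in $\mathbb{R}^n$, hence converges to some $x(t)$. To see that $x$ is continuous on $I$, I would restrict attention to an arbitrary compact subinterval $[a,b] \subset I$: on such a set $w$ is continuous and therefore bounded by some $M_{[a,b]}$, so $\|x_n-x_m\|_{\infty,[a,b]} \le M_{[a,b]}\|x_n-x_m\|_{\xi,\infty}$, i.e.\ $\{x_n\}$ is uniformly Cauchy on $[a,b]$, which makes $x$ continuous on $[a,b]$ and hence on all of $I$.

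Next I would show $x \in C_{\xi}(I,\mathbb{R}^n)$ and that $x_n \to x$ in $\|\cdot\|_{\xi,\infty}$. Since every Cauchy sequence is bounded, $C := \sup_n \|x_n\|_{\xi,\infty} < \infty$, and passing $n \to \infty$ in $\|x_n(t)\|/w(t) \le C$ gives $\|x(t)\|/w(t) \le C$, so $\|x\|_{\xi,\infty} \le C < \infty$. For the convergence, given $\varepsilon > 0$ choose $N$ with $\|x_n-x_m\|_{\xi,\infty} < \varepsilon$ for all $n,m \ge N$; then for fixed $n \ge N$ and any $t \in I$, letting $m \to \infty$ in $\|x_n(t)-x_m(t)\|/w(t) < \varepsilon$ yields $\|x_n(t)-x(t)\|/w(t) \le \varepsilon$, and taking the sup over $t$ gives $\|x_n - x\|_{\xi,\infty} \le \varepsilon$. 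The only subtle point, and what I expect to be the main (though still mild) obstacle, is the interchange of limits required to show that the pointwise limit $x$ actually lies in $C_{\xi}$ with $\|x-x_n\|_{\xi,\infty} \to 0$; this is handled precisely by the compact-subinterval argument for continuity together with the fact that $w$ is bounded below by $1$ (since $\mathbb{E}_\alpha(0)=1$ and the argument is nonnegative), which keeps the weighted estimates under control as $t$ ranges over the unbounded interval $I$.
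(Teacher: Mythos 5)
Your proof is correct: the axioms in part (1) are indeed routine since the weight $w(t)=\mathbb{E}_{\alpha}[\xi(\psi(t)-\psi(a))^{\alpha}]\ge 1>0$, and your completeness argument (pointwise Cauchy, uniform convergence on compact subintervals to get continuity, then the let-$m\to\infty$-and-take-sup step to get membership in $C_{\xi}$ and norm convergence) is the standard and correct one for weighted sup-norms. Note that the paper itself gives no proof of this lemma --- it only cites the reference [cong] --- so there is no in-paper argument to compare against; your write-up supplies exactly the expected argument, and the only inessential remark is that the lower bound $w\ge 1$ is not actually needed anywhere, since the pointwise division by $w(t)$ already controls the estimates on the unbounded interval $I$.
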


We introduce the notation to facilitate of the development of the paper 
\begin{enumerate}
\item $\mathcal{W}_{\psi }^{\alpha }\left( t,s,x\left( s\right) \right) :=N_{\psi }^{\alpha }\left( t,s\right) k\left( t,s,x\left( s\right) \right) $ with $N_{\psi }^{\alpha }\left( t,s\right) =\psi ^{\prime }\left( s\right) \left( \psi \left( t\right) -\psi \left( s\right) \right) ^{\alpha -1}$ and $\psi^{\prime }\left( s\right) =\dfrac{d}{ds}\psi \left( s\right)$;

\item $\mathcal{W}_{\psi }^{\alpha }\left( t,s,0\right) :=N_{\psi }^{\alpha }\left(t,s\right) k\left( t,s,0\right) $;

\item $\overline{\mathcal{W}}_{\psi }^{\alpha }\left( t,s,x\left( s\right) \right) :=N_{\psi }^{\alpha }\left( t,s\right) \overline{k}\left( t,s,x\left( s\right) \right)$;

\item $\Psi ^{\gamma }\left( t,a\right) :=\dfrac{\left( \psi \left( t\right) -\psi \left( a\right) \right) ^{\gamma -1}}{\Gamma \left( \gamma \right) }$.
\end{enumerate}

The proof of Lemma \ref{lemmA} and Corollary \ref{coro1} below, will be omitted here, however it follows the same steps as in Gronwall inequality (Theorem 3) and Corollary 3.10 \cite{gronwall}.

\begin{lemma}\label{lemmA} Let $u(t),v\left( t\right) ,g\left( t\right) \in C\left( I,\mathbb{R}_{+}\right) $, $r(t,\sigma )\in C\left( D,\mathbb{R}_{+}\right) $, where $D=\{(t,\tau )\in I^{2};a\leq \tau \leq +\infty \}$ and $c\geq 0$ is a
constant and $u,v$ are nonnegative and $g$ nonnegative and nondecreasing. If 
\begin{equation}\label{eq12}
u(t)\leq v(t)+g(t)\int_{a}^{t}N_{\psi }^{\alpha }\left( t,\tau \right) r(t,\tau )\,u(\tau )\,d\tau 
\end{equation}%
then 
\begin{equation*}
u(t)\leq v(t)+\int_{a}^{t}\sum_{k=1}^{\infty }\dfrac{(\left( g(t)\Gamma (\alpha )\right) ^{k}}{\Gamma \left( \alpha k\right) }N_{\psi }^{\alpha k}\left( t,\tau \right) r(t,\tau )v(\tau ),d\tau .
\end{equation*}
\end{lemma}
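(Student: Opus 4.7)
The plan is to mimic the standard Picard iteration used to prove Gronwall-type inequalities. Setting $(B\phi)(t):=g(t)\int_{a}^{t}N_{\psi}^{\alpha}(t,\tau)\,r(t,\tau)\,\phi(\tau)\,d\tau$, the hypothesis \eqref{eq12} reads $u\le v+Bu$ pointwise on $I$. Substituting the inequality into itself $n$ times gives
\[
u(t)\le \sum_{k=0}^{n}(B^{k}v)(t)+(B^{n+1}u)(t),
\]
so the task splits into (i) estimating $B^{k}v$ in closed form, and (ii) showing that the remainder $(B^{n+1}u)(t)$ tends to zero as $n\to\infty$.

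For (i) the combinatorial core is the iterated-kernel identity
\[
\int_{a}^{t}\!N_{\psi}^{\alpha}(t,\tau_{1})\!\int_{a}^{\tau_{1}}\!\!\cdots\!\int_{a}^{\tau_{k-1}}\!\!N_{\psi}^{\alpha}(\tau_{k-1},\tau_{k})\phi(\tau_{k})\,d\tau_{k}\cdots d\tau_{1}=\frac{(\Gamma(\alpha))^{k}}{\Gamma(k\alpha)}\int_{a}^{t}\!N_{\psi}^{k\alpha}(t,\tau)\,\phi(\tau)\,d\tau,
\]
which I would prove by induction on $k$. The inductive step is a Fubini interchange followed by the substitution $s=\psi(\sigma)$, reducing the innermost integral to the beta-function identity
\[
\int_{\tau}^{t}\psi'(s)(\psi(t)-\psi(s))^{\alpha-1}(\psi(s)-\psi(\tau))^{j\alpha-1}\,ds=\frac{\Gamma(\alpha)\Gamma(j\alpha)}{\Gamma((j+1)\alpha)}(\psi(t)-\psi(\tau))^{(j+1)\alpha-1};
\]
equivalently this is the semigroup property of the $\psi$-Riemann--Liouville operator. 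Monotonicity and nonnegativity of $g$ then let me pull out $g(t)^{k}$ from $(B^{k}v)(t)$ by replacing each inner $g(\tau_{i})$ with $g(t)$, and the time-dependent weight $r$ is handled in the same way by freezing its first argument at $t$ (the structural shortcut used in Theorem~3 of \cite{gronwall}). The resulting estimate is
\[
(B^{k}v)(t)\le \frac{(g(t)\Gamma(\alpha))^{k}}{\Gamma(k\alpha)}\int_{a}^{t}N_{\psi}^{k\alpha}(t,\tau)\,r(t,\tau)\,v(\tau)\,d\tau,
\]
which matches exactly the $k$-th term of the series in the conclusion.

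For (ii), on any compact $[a,b]\subset I$ the $L^{1}$-norm in $\tau$ of $N_{\psi}^{k\alpha}(t,\tau)$ is of order $(\psi(b)-\psi(a))^{k\alpha}/\Gamma(k\alpha+1)$; together with uniform bounds on $u,r,g$ this dominates $(B^{n+1}u)(t)$ by the tail of a Mittag-Leffler-type series, which vanishes as $n\to\infty$. Dominated convergence then justifies exchanging the limit with the sum and the integral, yielding the stated bound. The main obstacle is the bookkeeping in (i): the exponents on $\psi(t)-\psi(\tau)$ and the Gamma factors must be tracked precisely through the induction, and the freezing of $r(\tau_{i},\tau_{i+1})$ by $r(t,\tau_{i+1})$ is the only non-automatic step, exactly the point at which the proof mirrors the classical argument recalled in \cite{gronwall}.
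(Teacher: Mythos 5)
The paper itself omits the proof of Lemma \ref{lemmA}, deferring to the iteration argument for the $\psi$-Hilfer Gronwall inequality in \cite{gronwall}; your strategy (Picard iteration of $B$, the semigroup/beta identity for the iterated kernels, extraction of $g(t)^{k}$ by monotonicity, and killing the remainder with the Mittag-Leffler tail) is exactly that argument, and your kernel identity in (i) and the remainder estimate in (ii) are sound. The genuine gap is in your closed form for $B^{k}v$: the $k$-fold iterate carries the product $r(t,\tau_{1})\,r(\tau_{1},\tau_{2})\cdots r(\tau_{k-1},\tau_{k})$, and ``freezing the first argument at $t$'' (which already requires $r$ to be nondecreasing in its first variable, a hypothesis the lemma does not state) still leaves $k$ factors $\prod_{i}r(t,\tau_{i})$, not the single factor $r(t,\tau)$ appearing in your bound and in the stated conclusion. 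No majorization reduces $k$ factors of $r$ to one; the target is of the wrong order in $r$. Concretely, with $\alpha=1$, $\psi(t)=t$, $a=0$, $g\equiv v\equiv 1$ and $r\equiv R$, the hypothesis is satisfied with equality by $u(t)=e^{Rt}$, while the asserted conclusion gives $u(t)\le 1+R(e^{t}-1)$, which fails for large $R$; so the single-$r$ statement cannot be proved as written, and your induction will not close at step (i).

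What your method does deliver---and what is actually needed to deduce Corollary \ref{coro1}---is the version in which the $k$-th term carries $\left(r(t,t)\right)^{k}$ (or $r(t,\tau)\left(r(t,t)\right)^{k-1}$), obtained by the further majorization $r(t,\tau_{i})\le r(t,t)$ under the monotonicity assumed in the corollary; equivalently, absorb $r(t,t)$ into $g(t)$ and apply the scalar Gronwall inequality of \cite{gronwall} verbatim. You should either prove that corrected statement, or add a hypothesis such as $r\le 1$; as it stands, step (i) asserts an estimate that the iteration does not produce.
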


\begin{corolario} \label{coro1} Under the hypothesis of {\rm Lemma \ref{lemmA}}, let $r,v$ be two nondecreasing functions on $I$. Then, we have 
\begin{equation}\label{eq13}
u(t)\leq v(t)\mathbb{E}_{\alpha }[g(t)r(t,t)\Gamma (\alpha )(\psi (t)-\psi
(a))^{\alpha }]
\end{equation}
where $\mathbb{E}_{\alpha }(\cdot )$ is an one-parameter Mittag-Leffler function.
\end{corolario}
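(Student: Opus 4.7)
My plan is to combine Lemma~\ref{lemmA} with the two monotonicity hypotheses, using the monotonicity of $r$ \emph{before} applying the lemma so that a factor $r(t,t)$ enters each power of the series expansion. This is the key observation: pulling $r(t,t)$ out of the conclusion of Lemma~\ref{lemmA} only yields one factor of $r(t,t)$, whereas to reach a Mittag--Leffler whose argument contains $r(t,t)$, we need $r(t,t)^{k}$ in the $k$-th term. So the reduction must happen at the level of hypothesis \eqref{eq12}, not at the level of its conclusion.

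First, since $r(t,\cdot)$ is nondecreasing on $[a,t]$, I would replace $r(t,\tau)$ by $r(t,t)$ in \eqref{eq12}, rewriting the hypothesis as
\begin{equation*}
u(t) \le v(t) + \tilde g(t)\int_{a}^{t} N_{\psi}^{\alpha}(t,\tau)\, u(\tau)\, d\tau, \qquad \tilde g(t):=g(t)\,r(t,t).
\end{equation*}
Since $g$ is nonnegative and nondecreasing and $r(t,t)$ inherits these properties from $r$, the new coefficient $\tilde g$ still satisfies the hypotheses of Lemma~\ref{lemmA} (now with $r\equiv 1$).

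Applying Lemma~\ref{lemmA} to this reduced inequality and then using $v(\tau)\le v(t)$ (monotonicity of $v$) to pull $v(t)$ out and interchange sum and integral, I obtain
\begin{equation*}
u(t) \le v(t) + v(t)\sum_{k=1}^{\infty} \frac{\bigl(\tilde g(t)\,\Gamma(\alpha)\bigr)^{k}}{\Gamma(\alpha k)} \int_{a}^{t} \psi'(\tau)\bigl(\psi(t)-\psi(\tau)\bigr)^{\alpha k-1}\,d\tau.
\end{equation*}
The remaining integral collapses by the substitution $\sigma = \psi(\tau)$ to $(\psi(t)-\psi(a))^{\alpha k}/(\alpha k)$. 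Using $\alpha k\,\Gamma(\alpha k) = \Gamma(\alpha k+1)$, the factors recombine into the single scalar $\bigl(\tilde g(t)\,\Gamma(\alpha)(\psi(t)-\psi(a))^{\alpha}\bigr)^{k}/\Gamma(\alpha k+1)$, and after reinstating the $k=0$ term absorbed by $v(t)$, the series is exactly $\mathbb{E}_\alpha\bigl[g(t)\,r(t,t)\,\Gamma(\alpha)(\psi(t)-\psi(a))^{\alpha}\bigr]$, giving \eqref{eq13}.

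The only real obstacle is the ordering decision in the first paragraph; once $r(t,\tau)$ is absorbed into $g$ at the level of the hypothesis, the remainder is bookkeeping: Lemma~\ref{lemmA}, the bound on $v(\tau)$, an elementary substitution, and the Gamma identity $\alpha k\,\Gamma(\alpha k)=\Gamma(\alpha k+1)$. I would also note in passing that the required monotonicity of $\tilde g$ is automatic from the corollary's hypotheses (nondecreasing $g$, nonnegative and nondecreasing $r$), so no extra assumption is needed.
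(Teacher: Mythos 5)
Your proof is correct and supplies exactly the details the paper omits (it defers to \cite{gronwall}, Theorem 3 and Corollary 3.10, which treat the case $r\equiv 1$). Your key observation is also the right one: since the conclusion of Lemma~\ref{lemmA} carries only a single factor $r(t,\tau)$, bounding $r(t,\tau)\le r(t,t)$ there would yield $v(t)\bigl[1+r(t,t)\bigl(\mathbb{E}_{\alpha}[g(t)\Gamma(\alpha)(\psi(t)-\psi(a))^{\alpha}]-1\bigr)\bigr]$ rather than \eqref{eq13}, so absorbing $r(t,t)$ into $g$ at the level of hypothesis \eqref{eq12} is precisely what is needed; the remaining steps (monotonicity of $v$, the substitution, and $\alpha k\,\Gamma(\alpha k)=\Gamma(\alpha k+1)$) are all in order.
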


It's important to note that, Lemma \ref{lemmA}, is a generalization of the Gronwall inequality, when we take $ r\left(t,\tau \right)=1$ \cite{gronwall}.

\begin{lemma}\label{lemmaB} Let $u\left( t\right) ,p\left( t\right) ,\widetilde{g}\left(t\right) \in C\left( I,\mathbb{R} _{+}\right) ,$ $r\left( t,\sigma \right) 
\in C\left( D,\mathbb{R}_{+}\right) ,$ where $D$ is as in {\rm Lemma \ref{lemmA}} and $\widetilde{g}\left(t\right) \geq 0$ and $u\left( t\right) ,p\left( t\right) $ are nonnegative and $\widetilde{g}\left( t\right) $ nonnegative and nondecreasing. If
\begin{equation}
u(t)\leq \widetilde{g}(t)+\int_{a}^{t}N_{\psi }^{\alpha }\left( t,\tau \right) p(\tau )\left[ u\left( \tau \right) +\int_{a}^{\tau }N_{\psi }^{\alpha }\left( t,\tau \right) r\left( \tau ,\sigma \right) u\left( \sigma
\right) d\sigma \right]d\tau 
\end{equation}
for $t\in I$, then
\begin{equation}
u(t)\leq \widetilde{g}(t)\mathbb{E}_{\alpha }[p(t)\Gamma (\alpha )\mathbb{E}_{\alpha }\left( r(t,t)\Gamma (\alpha )(\psi (t)-\psi (a))^{\alpha }\right)(\psi (t)-\psi (a))^{\alpha }],
\end{equation}
where $\mathbb{E}_{\alpha }(\cdot )$ is an one-parameter Mittag-Leffler function.
\end{lemma}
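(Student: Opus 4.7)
The plan is to reduce the nested double-integral inequality in the hypothesis to a single fractional Gronwall-type inequality suitable for Corollary \ref{coro1}, in fact by applying Corollary \ref{coro1} twice. First, introduce the bracket $B(\tau) := u(\tau) + \int_a^\tau N_\psi^\alpha(\tau,\sigma) r(\tau,\sigma) u(\sigma) d\sigma$, so that the hypothesis reads $u(t) \leq \widetilde g(t) + \int_a^t N_\psi^\alpha(t,\tau) p(\tau) B(\tau) d\tau$. The hypothesis applied at time $\tau$ gives $u(\tau) \leq \widetilde g(\tau) + \int_a^\tau N_\psi^\alpha(\tau,s) p(s) B(s) ds$, and substituting this together with $u(\sigma) \leq B(\sigma)$ into the definition of $B$ produces a closed inequality $B(\tau) \leq V(\tau) + \int_a^\tau N_\psi^\alpha(\tau,\sigma) r(\tau,\sigma) B(\sigma) d\sigma$ with $V(\tau) := \widetilde g(\tau) + \int_a^\tau N_\psi^\alpha(\tau,s) p(s) B(s) ds$.

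Second, a first application of Corollary \ref{coro1} to the closed inequality for $B$ (with $g \equiv 1$, kernel $r$, and $V$ playing the role of the nondecreasing $v$) yields $B(\tau) \leq V(\tau) E_r(\tau)$, where $E_r(\tau) := \mathbb{E}_\alpha[r(\tau,\tau) \Gamma(\alpha)(\psi(\tau) - \psi(a))^\alpha]$. Expanding $V$ transforms this into an inequality of the form $B(\tau) \leq E_r(\tau) \widetilde g(\tau) + E_r(\tau) \int_a^\tau N_\psi^\alpha(\tau,s) p(s) B(s) ds$, again in the scope of Corollary \ref{coro1} but now with multiplicative factor $g(\tau) = E_r(\tau)$ and effective kernel $p(s)$. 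A second application of Corollary \ref{coro1} yields a completely explicit bound for $B$ in terms of $\widetilde g$, $E_r$, and a Mittag-Leffler factor. Reinserting this bound into the original hypothesis for $u(t)$ and simplifying the nested Mittag-Leffler factors (using monotonicity of $\widetilde g$, $p$, and $r$, together with the fact that $\psi$ is increasing) produces exactly the single iterated Mittag-Leffler expression $\mathbb{E}_\alpha[p(t)\Gamma(\alpha) \mathbb{E}_\alpha(r(t,t)\Gamma(\alpha)(\psi(t)-\psi(a))^\alpha)(\psi(t)-\psi(a))^\alpha]$ appearing in the statement.

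The main obstacle is the monotonicity bookkeeping at each application of Corollary \ref{coro1}, and in particular the final collapse of the two nested Mittag-Leffler factors into a single one: one must verify that the intermediate $V(\tau)$ is nondecreasing (needed for the first application of the corollary), that $r(\tau,\tau)$ and $p(\tau)$ can be legitimately frozen at the endpoint $t$ when estimating the exponential factor, and crucially that the bound on $u(t)$ is obtained by substituting the estimate of $B$ back into the outer integral $\int_a^t N_\psi^\alpha(t,\tau) p(\tau) B(\tau) d\tau$ rather than through the cruder inequality $u \leq B$, which would otherwise introduce a spurious outer $E_r(t)$ factor that is absent from the claimed bound.
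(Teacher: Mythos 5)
Your route is genuinely different from the paper's, and it is the more defensible of the two. The paper never isolates the inner bracket: it formally substitutes $v(t)=p(t)u(t)$ and $g(t)=p(t)$ into the hypothesis and conclusion of Lemma \ref{lemmA}/Corollary \ref{coro1}, then ``applies the integral $\int_a^t N_{\psi}^{\alpha}(t,\tau)(\cdot)\,d\tau$ and adds $\widetilde{g}(t)$'' to both sides so as to transmute those statements into the hypothesis of Lemma \ref{lemmaB} and an intermediate inequality, and closes with one more appeal to Lemma \ref{lemmA}; several of these passages (for instance, replacing the left-hand side $\widetilde{g}(t)+\int_a^t N_{\psi}^{\alpha}(t,\tau)u(\tau)\,d\tau$ by $u(t)$) do not follow as written, so the paper's argument is best read as a formal derivation of the shape of the answer rather than a proof. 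Your decomposition into $B$ and $V$ with two applications of Corollary \ref{coro1}, and your insistence on reinserting the bound for $B$ into the outer integral rather than using $u\le B$, is the standard rigorous treatment and does deliver the stated estimate. Two points deserve care beyond what you flag. First, the final ``simplification'' works exactly only if you evaluate $\int_a^t N_{\psi}^{\alpha}(t,\tau)\,\mathbb{E}_{\alpha}[\xi(\psi(\tau)-\psi(a))^{\alpha}]\,d\tau$ with the paper's Mittag-Leffler integration lemma, which gives $\tfrac{\Gamma(\alpha)}{\xi}\left(\mathbb{E}_{\alpha}[\xi(\psi(t)-\psi(a))^{\alpha}]-1\right)$ and so collapses $1+(\mathbb{E}_{\alpha}-1)$ to $\mathbb{E}_{\alpha}$; a cruder sup-bound on the Mittag-Leffler factor inside that integral produces coefficients $1/\Gamma(\alpha+1)^{k}$ instead of $1/\Gamma(\alpha k+1)$ and overshoots the claimed bound (already at $\alpha=1$ one gets $1+ze^{z}\not\le e^{z}$). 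A cleaner variant that avoids this entirely is to close the inequality on $V$ itself: from $B\le VE_{r}$ one gets $V(t)\le\widetilde{g}(t)+\int_a^t N_{\psi}^{\alpha}(t,\tau)p(\tau)E_{r}(\tau)V(\tau)\,d\tau$, and a single further application of Corollary \ref{coro1} with kernel $pE_{r}$ plus $u\le V$ finishes. Second, the monotonicity issues you mention are real and not fully repairable from the stated hypotheses: the lemma does not assume $p$ nondecreasing, yet both your freezing of $p(\tau)$ at $t$ and the paper's own substitution ``$g(t)=p(t)$ nondecreasing'' require it, and the nondecreasingness of $V$ is not automatic for $\alpha<1$; these are defects inherited from the statement rather than introduced by your argument.
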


\begin{proof}
Taking $v\left( t\right) =p\left( t\right) u\left( t\right) $ $\left( \mbox{with}\text{ } p\left( t\right) \text{ and }u\left( t\right) \text{are nonnegative}\right) $ and $g\left( t\right) =p\left( t\right) $ nondecreasing, substituting in {\rm Eq.(\ref{eq12})}, we have
\begin{equation}\label{eq16}
u\left( t\right) \leq p\left( t\right) u\left( t\right) +p\left( t\right) \int_{a}^{t}N_{\psi }^{\alpha }\left( t,\tau \right) r\left( t,\tau \right) u\left( \tau \right) d\tau.
\end{equation}

Applying the integral $\displaystyle \int_{a}^{t}\psi ^{\prime }\left( \tau \right) \left(\psi \left( t\right) -\psi \left( \tau \right) \right) ^{\alpha -1}d\tau $ and summing $\widetilde{g}\left( t\right) $ on both sides of Eq.(\ref{eq16}), we get
\begin{eqnarray*}
&&\widetilde{g}\left( t\right) +\int_{a}^{t}N_{\psi }^{\alpha }\left( t,\tau
\right) u\left( \tau \right) d\tau  \\
&\leq &\widetilde{g}\left( t\right) +\int_{a}^{t}N_{\psi }^{\alpha }\left(
t,\tau \right) \left[ p\left( \tau \right) u\left( \tau \right) +p\left(
\tau \right) \int_{a}^{\tau }N_{\psi }^{\alpha }\left( s,\sigma \right)
r\left( \tau ,\sigma \right) u\left( \sigma \right) d\sigma \right] d\tau .
\end{eqnarray*}

Therefore, we conclude that 
\begin{equation}\label{eq17}
u\left( t\right) \leq \widetilde{g}\left( t\right) +\int_{a}^{t}N_{\psi}^{\alpha }\left( t,\tau \right) \left[ p\left( \tau \right) u\left( \tau\right) +p\left( \tau \right) \int_{a}^{\tau }N_{\psi }^{\alpha }\left(
s,\sigma \right) r\left( \tau ,\sigma \right) u\left( \sigma \right) d\sigma \right] d\tau.
\end{equation}

Note that, the Eq.(\ref{eq17}) is exactly the hypotheses of this Lemma.

On the other hand, we will perform the same procedure as in Lemma \ref{lemmA}, i.e., Eq.(\ref{eq12}). Then, taking $v\left( t\right) =p\left( t\right) u\left( t\right) $ $\left(\text{with }p\left( t\right) \text{ and }u\left( t\right) \text{ nonnegative}\right) $ and $g\left( t\right) =p\left( t\right) $ nondecreasing in Eq.(\ref{eq12}), we have
\begin{equation}\label{eq18}
u\left( t\right) \leq p\left( t\right) u\left( t\right) \mathbb{E}_{\alpha }\left[ p\left( t\right) r\left(t,t\right) \Gamma \left( \alpha \right)\left( \psi \left( t\right) -\psi \left( a\right) \right) ^{\alpha }\right] .
\end{equation}

Applying the integral $\displaystyle\int_{a}^{t}\psi ^{\prime }\left( \tau \right) \left( \psi \left( t\right) -\psi \left( \tau \right) \right) ^{\alpha -1}d\tau $ and summing $\widetilde{g}\left( t\right) $ on both sides of Eq.(\ref{eq18}), we get
\begin{equation*}
u(t)\leq \widetilde{g}(t)\left( 1+\int_{a}^{t}N_{\psi }^{\alpha }\left(t,\tau \right) p(\tau )\mathbb{E}_{\alpha }\left[ p\left( t\right) r\left(\tau ,\tau \right) \Gamma \left( \alpha \right) \left( \psi \left( \tau
\right) -\psi \left( a\right) \right) ^{\alpha }\right] u\left( \tau \right)\,d\tau \right) .
\end{equation*}

Using the Lemma \ref{lemmA}, we have
\begin{equation*}
u(t)\leq \widetilde{g}(t)\mathbb{E}_{\alpha }[p(t)\Gamma (\alpha )\mathbb{E}_{\alpha }\left( r(t,t)\Gamma (\alpha )(\psi (t)-\psi (a))^{\alpha }\right)(\psi (t)-\psi (a))^{\alpha }].
\end{equation*}

Thus, we conclude the proof.
\end{proof}


\section{Existence, uniqueness and estimates on the solutions}

In this section we are going to present our main results concerning the existence and uniqueness of solutions of Eq.(\ref{1.1}) and Eq.(\ref{1.2}).

\begin{teorema}\label{Theo1} Let $L>0$, $\xi >0$, $M\geq 0$, $\delta >1$ be constants with $\xi =L_{\delta }$. Suppose the  functions $f,k$ in {\rm{Eq.(\ref{1.1})}} satisfying the conditions 
\begin{equation}
\left\Vert f(t,u,v)-f(t,\overline{u},\overline{v})\right\Vert \leq M\left( \left\Vert u-\overline{u}\right\Vert +\left\Vert v-\overline{v}\right\Vert\right)   \label{AB}
\end{equation}
and 
\begin{equation}
\left\Vert k(t,s,u)-k(t,s,\overline{u})\right\Vert \leq L\left\Vert u-\overline{u}\right\Vert   \label{BA}
\end{equation}
and 
\begin{equation}
d_{1}=\underset{t\in I}{\sup }\frac{1}{\mathbb{E}_{\alpha }[\xi \left( \psi\left( t\right) -\psi \left( a\right) \right) ^{\alpha }]}\left\Vert f\left(t,0,\frac{1}{\Gamma (\alpha )}\int_{a}^{t}\mathcal{W}_{\psi }^{\alpha }\left(
t,s,0\right) \,ds\right) \right\Vert <\infty 
\end{equation}%
with $\mathcal{W}_{\psi }^{\alpha }\left( t,s,0\right)=N_{\psi }^{\alpha }\left(t,s\right) K\left( t,s,0\right)$.

If $M(1+1/\delta )<1$, then the integral in {\rm{Eq.(\ref{1.1})}} has a unique solution $x\in C_{\xi }\left( I,\mathbb{R}^{n}\right)$.
\end{teorema}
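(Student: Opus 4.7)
I would apply the Banach contraction principle in the Banach space $(C_\xi(I,\mathbb{R}^n), \|\cdot\|_{\xi,\infty})$ established above. Define the operator
\[
(Tx)(t) := f\!\left(t,\, x(t),\, \frac{1}{\Gamma(\alpha)}\int_a^t \mathcal{W}_\psi^\alpha(t,s,x(s))\,ds\right),
\]
so that a solution of Eq.(\ref{1.1}) is exactly a fixed point of $T$. The proof then splits into two steps: show $T(C_\xi) \subset C_\xi$, and show $T$ is a strict contraction in $\|\cdot\|_{\xi,\infty}$.

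For the self-map step, I would write $(Tx)(t)$ by inserting and subtracting $f\bigl(t,0,\tfrac{1}{\Gamma(\alpha)}\int_a^t \mathcal{W}_\psi^\alpha(t,s,0)\,ds\bigr)$ and apply the Lipschitz conditions (\ref{AB}) on $f$ and (\ref{BA}) on $k$. Dividing by the weight $\mathbb{E}_\alpha[\xi(\psi(t)-\psi(a))^\alpha]$ and bounding $\|x(s)\|$ pointwise by $\|x\|_{\xi,\infty}\,\mathbb{E}_\alpha[\xi(\psi(s)-\psi(a))^\alpha]$ inside the remaining convolution reduces the integral to $\frac{1}{\Gamma(\alpha)}I_{a^+}^{\alpha,\psi}\mathbb{E}_\alpha[\xi(\psi(t)-\psi(a))^\alpha]$, which by the earlier Mittag-Leffler lemma from \cite{almeida} is bounded by $\xi^{-1}\mathbb{E}_\alpha[\xi(\psi(t)-\psi(a))^\alpha]$. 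Using the finiteness of $d_1$ this gives $\|Tx\|_{\xi,\infty} \leq d_1 + M(1+L/\xi)\|x\|_{\xi,\infty} < \infty$, and continuity of $Tx$ in $t$ follows from the continuity hypotheses on $f,k$ via dominated convergence applied to the $\psi$-fractional integral.

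For the contraction step, the same calculation applied to $x-y$ kills the constant term and yields
\[
\|Tx - Ty\|_{\xi,\infty} \leq M\left(1 + \frac{L}{\xi}\right)\|x-y\|_{\xi,\infty} = M\left(1 + \frac{1}{\delta}\right)\|x-y\|_{\xi,\infty},
\]
once one reads the hypothesis $\xi = L_\delta$ as $\xi = L\delta$, so $L/\xi = 1/\delta$. The standing assumption $M(1+1/\delta)<1$ then makes $T$ a strict contraction and Banach's fixed point theorem produces a unique $x\in C_\xi(I,\mathbb{R}^n)$ solving Eq.(\ref{1.1}).

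The main technical point — and the reason the weighted norm was introduced in the first place — is the Mittag-Leffler identity $I_{a^+}^{\alpha,\psi}\mathbb{E}_\alpha[\xi(\psi(t)-\psi(a))^\alpha] = \xi^{-1}\bigl(\mathbb{E}_\alpha[\xi(\psi(t)-\psi(a))^\alpha]-1\bigr)$, the $\psi$-analogue of the Bielecki trick: the weight is tailored so that its $\psi$-fractional integral reproduces itself up to the small factor $1/\xi$, which is what makes the convolution term contractive for $\xi$ large. Once that identity is invoked inside the supremum, both the self-map property and the contraction constant fall out of a single pointwise estimate; the only book-keeping subtlety is to estimate pointwise first and pass to the supremum only after the Mittag-Leffler identity has been applied, rather than the other way around.
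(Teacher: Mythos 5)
Your proposal is correct and takes essentially the same route as the paper: the same operator $T$ (the paper's added-and-subtracted $f(t,0,\cdot)$ terms cancel, so the two definitions agree), the same insertion of the zero-argument term to invoke the Lipschitz conditions, the same Mittag-Leffler integral identity from \cite{almeida} to absorb the convolution into the weight with factor $1/\xi = 1/(L\delta)\cdot L = 1/\delta\cdot(1/L)\cdot L$, the same contraction constant $M(1+1/\delta)$, and the Banach fixed point theorem. If anything, your remark about estimating pointwise before passing to the supremum, and your note on verifying continuity of $Tx$, are cleaner than the paper's own bookkeeping.
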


\begin{proof}
First, note that the nonlinear fractional Volterra integral equation given by {\rm{Eq.(\ref{1.1})}} can be rewritten in the following form
\begin{eqnarray}
x(t) &=&f\left( t,x(t),\frac{1}{\Gamma (\alpha )}\int_{a}^{t}\mathcal{W}_{\psi}^{\alpha }\left( t,s,x\left( s\right) \right) \,ds\right) \,- \notag \\
&&f\left( t,0,\frac{1}{\Gamma (\alpha )}\int_{a}^{t}\mathcal{W}_{\psi }^{\alpha}\left( t,s,0\right) \,ds\right) +f\left( t,0,\frac{1}{\Gamma (\alpha )}\int_{a}^{t}\mathcal{W}_{\psi }^{\alpha }\left( t,s,0\right) ds\right)\notag \\   \label{T0}
\end{eqnarray}
for $t\in I$.

Now, for $x\in C_{\xi }\left( I,\mathbb{R}^{n}\right)$, we define the following operator, $T$, by means of 
\begin{eqnarray}
(Tx)(t) &=&f\left( t,x(t),\frac{1}{\Gamma (\alpha )}\int_{a}^{t}\mathcal{W}_{\psi}^{\alpha }\left( t,s,x\left( s\right) \right) \,ds\right) \,- \notag \\
&&f\left( t,0,\frac{1}{\Gamma (\alpha )}\int_{a}^{t}\mathcal{W}_{\psi }^{\alpha}\left( t,s,0\right) ds\right) +f\left( t,0,\frac{1}{\Gamma (\alpha )}\int_{a}^{t}\mathcal{W}_{\psi }^{\alpha }\left( t,s,0\right) \,ds\right).\notag \\  \label{T1}
\end{eqnarray}%
Using \textrm{Eq.(\ref{T1})} and hypotheses, we get 
\begin{equation}
\left\Vert Tx\right\Vert _{\xi ,\infty }=\underset{t\in I}{\sup }\frac{\left\Vert (Tx)(t)\right\Vert }{{\mathbb{E}}_{\alpha }\left[ \xi \left( \psi\left( t\right) -\psi \left( a\right) \right) ^{\alpha }\right] }.
\label{T1.1}
\end{equation}

Thus {\rm{Eq.(\ref{T1.1})}} can be written as 
\begin{eqnarray}\label{T1.2}
&&\left\Vert Tx\right\Vert _{\xi ,\infty }  \notag \\
&=&\underset{t\in I}{\sup }\frac{1}{{\mathbb{E}}_{\alpha }\left[ \xi \left( \psi \left( t\right) -\psi \left( a\right) \right) ^{\alpha }\right] }\left\Vert f\left( t,x(t),\frac{1}{\Gamma \left( \alpha \right) }%
\int_{a}^{t}\mathcal{W}_{\psi }^{\alpha }\left( t,s,x\left( s\right) \right) ds\right) \right. \,-  \notag \\
&&\left. f\left( t,0,\frac{1}{\Gamma \left( \alpha \right) }\int_{a}^{t}\mathcal{W}_{\psi }^{\alpha }\left( t,s,0\right) \,ds\right) +f\left( t,0,\frac{1}{\Gamma \left( \alpha \right) }\int_{a}^{t}\mathcal{W}_{\psi }^{\alpha }\left(
t,s,0\right) ds\right) \right\Vert   \notag \\
&\leq &\underset{t\in I}{\sup }\frac{1}{{\mathbb{E}}_{\alpha }\left[ \xi \left( \psi \left( t\right) -\psi \left( a\right) \right) ^{\alpha }\right] }\left\Vert f\left( t,x(t),\frac{1}{\Gamma \left( \alpha \right) }%
\int_{a}^{t}\mathcal{W}_{\psi }^{\alpha }\left( t,s,x\left( s\right) \right)\,ds\right) \right. -  \notag \\
&&\left. f\left( t,0,\frac{1}{\Gamma \left( \alpha \right) }\int_{a}^{t}\mathcal{W}_{\psi }^{\alpha }\left( t,s,0\right) \,ds\right) \right\Vert + \notag \\
&&\underset{t\in I}{\sup }\frac{1}{{\mathbb{E}}_{\alpha }\left[ \xi \left( \psi \left( t\right) -\psi \left( a\right) \right) ^{\alpha }\right] } \left\Vert f\left( t,0,\frac{1}{\Gamma (\alpha }\int_{a}^{t}\mathcal{W}_{\psi
}^{\alpha }\left( t,s,0\right) \,ds\right) \right\Vert   \notag \\
&=&d_{1}+\underset{t\in I}{\sup }\frac{1}{{\mathbb{E}}_{\alpha }\left[ \xi \left( \psi \left( t\right) -\psi \left( a\right) \right) ^{\alpha }\right] }\left\Vert f\left( t,x(t),\frac{1}{\Gamma \left( \alpha \right) }%
\int_{a}^{t}\mathcal{W}_{\psi }^{\alpha }\left( t,s,x\left( s\right) \right) \,ds\right) \right. -  \notag \\
&&\left. f\left( t,0,\frac{1}{\Gamma \left( \alpha \right) } \int_{a}^{t}\mathcal{W}_{\psi }^{\alpha }\left( t,s,0\right) \,ds\right) \right\Vert  \notag \\
\end{eqnarray}
\begin{eqnarray}\label{T1.2}
&\leq &d_{1}+\underset{t\in I}{\sup }\frac{M}{{\mathbb{E}}_{\alpha }\left[ \xi \left( \psi \left( t\right) -\psi \left( a\right) \right) ^{\alpha } \right] }\left\{ \left\Vert x(t)\right\Vert +\left\Vert \frac{1}{\Gamma
(\alpha )}\int_{a}^{t}\mathcal{W}_{\psi }^{\alpha }\left( t,s,x\left( s\right) \right) \,ds\right. \right. -  \notag \\
&&\left. \left. \frac{1}{\Gamma (\alpha )}\int_{a}^{t}\mathcal{W}_{\psi }^{\alpha }\left( t,s,0\right) \,ds\right\Vert \right\}   \notag \\ &\leq &d_{1}+\underset{t\in I}{\sup }\frac{M}{{\mathbb{E}}_{\alpha }\left[
\xi \left( \psi \left( t\right) -\psi \left( a\right) \right) ^{\alpha } \right] }\left\{ \left\Vert x(t)\right\Vert +\frac{1}{\Gamma (\alpha )} \int_{a}^{t}N_{\psi }^{\alpha }(t,s)\,L\left\Vert x\left( s\right)
\right\Vert \,ds\right\}.\notag \\
\end{eqnarray}

Manipulating {\rm{Eq.(\ref{T1.2})}} we can write 
\begin{eqnarray}
&&\left( Tx\right) (t)\notag \\ &\leq &d_{1}+M\underset{t\in I}{\sup }\frac{\left\Vert x\left( t\right) \right\Vert }{{\mathbb{E}}_{\alpha }\left[ \xi \left( \psi \left( t\right) -\psi \left( a\right) \right) ^{\alpha }\right] }+ML\underset {t\in I}{\sup }\frac{\left\Vert x\left( t\right) \right\Vert }{{\mathbb{E}}_{\alpha }\left[ \xi \left( \psi \left( t\right) -\psi \left( a\right) \right) ^{\alpha }\right] } \notag \\
&&\underset{t\in I}{\sup }\frac{1}{{\mathbb{E}}_{\alpha }\left[ \xi \left( \psi \left( t\right) -\psi \left( a\right) \right) ^{\alpha }\right] }\left\{ \frac{1}{\Gamma (\alpha )}\int_{a}^{t}N_{\psi }^{\alpha }(t,s){%
\mathbb{E}}_{\alpha }\left[ \xi \left( \psi \left( t\right) -\psi \left(a\right) \right) ^{\alpha }\right] \,ds\right\} \notag\\
&=&d_{1}+M+ML\left\Vert x\right\Vert _{\xi ,\infty }\underset{t\in I}{\sup }\frac{1}{{\mathbb{E}}_{\alpha }\left[ \xi \left( \psi \left( t\right) -\psi\left( a\right) \right) ^{\alpha }\right] }\left\{ \frac{1}{\xi }\left( {%
\mathbb{E}}_{\alpha }\left[ \xi \left( \psi \left( t\right) -\psi \left(a\right) \right) ^{\alpha }\right] -1\right) \right\}  \notag \\
&=&d_{1}+M\left\Vert x\right\Vert _{\xi ,\infty }\left[ 1+\frac{L}{\xi }\underset{t\in I}{\sup }\left( 1-\frac{1}{{\mathbb{E}}_{\alpha }\left[ \xi\left( \psi \left( t\right) -\psi \left( a\right) \right) ^{\alpha }\right] }%
\right) \right] .
\end{eqnarray}%

Since $\mathbb{E}_{\alpha }(\cdot )$ is a monotone increasing function on real line, we have 
\begin{equation}
\left\Vert Tx\right\Vert _{\xi ,\infty }\leq d_{1}+M\left\Vert x\right\Vert_{\xi ,\infty }\left( 1+\frac{L}{\xi }\right) =d_{1}+\left\Vert x\right\Vert_{\xi ,\infty }M\left( 1+\frac{1}{\delta }\right) <\infty .
\end{equation}

Therefore, the operator $T$ maps $C_{\xi }(I,\mathbb{R}^{n})$ into itself, i.e., 
\begin{equation}
T\left( \left( C_{\xi }\left( I,\mathbb{R}^{n}\right) ,\left\Vert \left( \cdot \right) \right\Vert _{\xi ,\infty }\right) \right) \subset \left( C_{\xi }\left( I,\mathbb{R}^{n}\right) ,\left\Vert \left( \cdot \right)
\right\Vert _{\xi ,\infty }\right).
\end{equation}

We now show the operator $T$ is a contraction. Let $u,v\in C_{\xi }\left( I,\mathbb{R}^{n}\right) $, then, by {\rm{Eq.(\ref{T1})}} and hypotheses, we get 
\begin{eqnarray}\label{T1.3}
&&d_{\xi ,\infty }\left( Tu,Tv\right)\notag \\  &=&\underset{t\in I}{\sup }\frac{\left\Vert (Tu)(t)-(Tv)(t)\right\Vert }{{\mathbb{E}}_{\alpha }\left[ \xi\left( \psi \left( t\right) -\psi \left( a\right) \right) ^{\alpha }\right] } \notag \\ &=&\underset{t\in I}{\sup }\frac{
\begin{array}{l}
\left\Vert f\left( t,u(t),\displaystyle\frac{1}{\Gamma (\alpha )}\int_{a}^{t}\mathcal{W}_{\psi}^{\alpha }\left( t,s,u\left( s\right) \right) \,ds\right) \right. - \\ \left. f\left( t,v(t),\displaystyle\frac{1}{\Gamma (\alpha )}\int_{a}^{t}\mathcal{W}_{\psi}^{\alpha }\left( t,s,v\left( s\right) \right) \,ds\right) \right\Vert 
\end{array}
}{{\mathbb{E}}_{\alpha }\left[ \xi \left( \psi \left( t\right) -\psi \left(a\right) \right) ^{\alpha }\right] }.
\end{eqnarray}

As above, manipulating {\rm{Eq.(\ref{T1.3})}}, we can write 
\begin{equation}
d_{\xi ,\infty }\left( Tu,Tv\right) =M d_{\xi ,\infty }\left[ 1+\frac{L}{\xi }\underset{t\in I}{\sup }\left( 1-\frac{1}{{\mathbb{E}}_{\alpha }\left[ \xi\left( \psi \left( t\right) -\psi \left( a\right) \right) ^{\alpha }\right] }\right) \right] .
\end{equation}%
Since $\mathbb{E}_{\alpha }(\cdot )$ is a monotone increasing function on real line, we have 
\begin{equation}
d_{\xi ,\infty }\left( Tu,Tv\right) \leq Md_{\xi ,\infty }\left( u,v\right) \left( 1+\frac{L}{\xi }\right) =M\left( 1+\frac{1}{\delta }\right) d_{\xi,\infty }(u,v).
\end{equation}

By hypotheses, $M(1+1/\delta )<1$, then by Banach fixed point theorem \cite{ZE4,ZE5}, the operator, $T$, has a unique fixed point in $C_{\xi }\left( I,\mathbb{R}^{n}\right) $. Thus, we conclude that, the fixed point of $T$ is however a solution of {\rm{Eq.(\ref{1.1})}}. 
\end{proof}

\begin{teorema}
Let $L$, $\xi $, $M$, $\delta $ be as in {\rm {Theorem (\ref{Theo1})}}. Suppose the functions $f$ and $k$ in {\rm{Eq.(\ref{1.2})}} satisfying the conditions given in {\rm{Eq.(\ref{AB})}} and {\rm{Eq.(\ref{BA})}} and the relation 
\begin{equation*}
d_{2}=\underset{t\in I}{\sup }\frac{1}{{\mathbb{E}}_{\alpha }\left[ \xi \left( \psi \left( t\right) -\psi \left( a\right) \right) ^{\alpha }\right] } \left\Vert \Psi ^{\gamma }\left( t,a\right) x_{0}-I_{a^{+}}^{\alpha ,\psi }f\left( s,0,\frac{1}{\Gamma (\alpha )} \int_{a}^{s}\mathcal{W}_{\psi }^{\alpha }\left( s,\sigma ,0\right) \,d\sigma \right)\right\Vert .
\end{equation*}
If $\displaystyle\frac{M}{\xi }\left( 1+\frac{1}{\delta }\right) <1$, then the nonlinear integrodifferential equation {\rm{Eq.(\ref{1.2})}} has a unique solution $x\in C_{\xi }\left( I,\mathbb{R}^{n}\right) $.
\end{teorema}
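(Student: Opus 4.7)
The plan is to convert Eq.(\ref{1.2}) into an equivalent Volterra-type integral equation and then replicate the fixed-point argument of Theorem~\ref{Theo1} on the Banach space $(C_\xi(I,\R^n),\|\cdot\|_{\xi,\infty})$. Applying $I_{a^+}^{\alpha,\psi}$ to both sides of Eq.(\ref{1.2}), invoking Theorem~\ref{t2} together with the relation $1-\gamma=(1-\beta)(1-\alpha)$ and the initial datum $I_{a^+}^{1-\gamma,\psi}x(a)=x_0$, I would obtain
\begin{equation*}
x(t)=\Psi^{\gamma}(t,a)\,x_0+I_{a^+}^{\alpha,\psi}f\!\left(s,x(s),\tfrac{1}{\Gamma(\alpha)}\int_a^s\mathcal{W}_\psi^{\alpha}(s,\sigma,x(\sigma))\,d\sigma\right).
\end{equation*}
I would then define the operator $T$ on $C_\xi(I,\R^n)$ to be the right-hand side above, isolating by an add-and-subtract the $x$-independent term $\Psi^\gamma(t,a)x_0 - I_{a^+}^{\alpha,\psi}f(s,0,\tfrac{1}{\Gamma(\alpha)}\int_a^s\mathcal{W}_\psi^{\alpha}(s,\sigma,0)d\sigma)$, whose weighted sup-norm is precisely $d_2$.

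To verify $T(C_\xi)\subset C_\xi$, I would estimate $\|Tx\|_{\xi,\infty}\leq d_2+\|I_{a^+}^{\alpha,\psi}[f(\cdot,x,\ldots)-f(\cdot,0,\ldots_0)]\|_{\xi,\infty}$. The Lipschitz conditions Eq.(\ref{AB})--Eq.(\ref{BA}) dominate the integrand by $M\|x(s)\|+\tfrac{ML}{\Gamma(\alpha)}\int_a^s N_\psi^{\alpha}(s,\sigma)\|x(\sigma)\|\,d\sigma$. Bounding $\|x(\tau)\|\leq\|x\|_{\xi,\infty}\mathbb{E}_\alpha[\xi(\psi(\tau)-\psi(a))^\alpha]$ and applying the Almeida identity $I_{a^+}^{\alpha,\psi}\mathbb{E}_\alpha[\xi(\psi(t)-\psi(a))^\alpha]=\tfrac{1}{\xi}(\mathbb{E}_\alpha-1)$ first to the inner integral (in the variable $s$) and then to the outer $I_{a^+}^{\alpha,\psi}$, together with $L/\xi=1/\delta$, should collapse the bound to $\|Tx\|_{\xi,\infty}\leq d_2+\tfrac{M}{\xi}(1+1/\delta)\|x\|_{\xi,\infty}<\infty$. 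The contraction estimate is entirely analogous: applying the same chain to $(Tu)(t)-(Tv)(t)$ yields $d_{\xi,\infty}(Tu,Tv)\leq\tfrac{M}{\xi}(1+1/\delta)\,d_{\xi,\infty}(u,v)$, which is a strict contraction by the hypothesis $\tfrac{M}{\xi}(1+1/\delta)<1$. Banach's fixed-point theorem on the complete metric space $(C_\xi(I,\R^n),d_{\xi,\infty})$ then delivers the unique fixed point, which, by Theorem~\ref{t1}, solves Eq.(\ref{1.2}).

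The main technical obstacle I foresee is the bookkeeping in the double application of the Almeida identity. The inner Lipschitz factor $L$ already sits inside an integral of kernel $N_\psi^\alpha$ that is itself inside an outer $I_{a^+}^{\alpha,\psi}$; the first application converts the inner $L\int N_\psi^\alpha\mathbb{E}_\alpha\,d\sigma$ into $\tfrac{L}{\xi}(\mathbb{E}_\alpha-1)$, while the second supplies the overall prefactor $1/\xi$ from $I_{a^+}^{\alpha,\psi}\mathbb{E}_\alpha$. Tracking these two reductions correctly, using $\mathbb{E}_\alpha-1\leq\mathbb{E}_\alpha$ at the right moment and substituting $\xi=L\delta$ at the very end, is the only point where the estimate has real room to go astray from the sharp constant $\tfrac{M}{\xi}(1+1/\delta)$ appearing in the hypothesis.
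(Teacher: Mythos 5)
Your proposal is correct and takes essentially the same route as the paper: the paper likewise reduces Eq.(\ref{1.2}) to the integral equation $x(t)=\Psi^{\gamma}(t,a)x_{0}+I_{a^{+}}^{\alpha,\psi}f(\cdot)$ via Theorems \ref{t1}--\ref{t2}, defines the same add-and-subtract operator on $C_{\xi}(I,\mathbb{R}^{n})$, and concludes by the Banach fixed-point theorem, explicitly deferring the self-map and contraction estimates to ``small and appropriated modifications'' of Theorem \ref{Theo1}. If anything, you supply more of the bookkeeping (the two applications of the Mittag-Leffler integral identity that produce the extra factor $1/\xi$ in the contraction constant) than the paper itself does.
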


\begin{proof}
We only present the idea of the proof, following the same steps as in the above theorem. First, we will show that {\rm{Eq.(\ref{1.2})}} is equivalent to the 
following nonlinear integral equation 
\begin{equation}
x(t)=\Psi ^{\gamma }\left( t,a\right) x_{0}+I_{a^{+}}^{\alpha ,\psi }f\left( s,x(s),\frac{1}{\Gamma (\alpha )}\int_{a}^{s}\mathcal{W}_{\psi }^{\alpha }\left( s,\sigma ,x\left( \sigma \right) \right) \,d\sigma \right) .  \label{AA}
\end{equation}

In fact, applying the fractional derivative ${}^{H}{\mathbb{D}} _{a^{+}}^{\alpha ,\beta ,\psi }(\cdot )$ on both sides of {\rm{ Eq.(\ref{AA})}} and using {\rm Theorem \ref{t1}}, we get 
\begin{eqnarray}\label{BB}
&&{}^{H}{\mathbb{D}}_{a^{+}}^{\alpha ,\beta ,\psi }x(t)\notag \\ &=&\text{ }^{H}{\mathbb{D}}_{a^{+}}^{\alpha ,\beta ,\psi }\left[ \Psi ^{\gamma }\left(t,a\right) x_{0}\right] +{}^{H}{\mathbb{D}}_{a^{+}}^{\alpha ,\beta ,\psi }%
\left[ I_{a^{+}}^{\alpha ,\psi }f\left( s,x(s),\frac{1}{\Gamma (\alpha )}\int_{a}^{s}\mathcal{W}_{\psi }^{\alpha }\left( s,\sigma ,x\left( \sigma \right)\right) \,\,d\sigma \right) \right]   \notag \\
&=&f\left( t,x(t),\frac{1}{\Gamma (\alpha )}\int_{a}^{t}\mathcal{W}_{\psi }^{\alpha }\left( t,s,x\left( s\right) \right) \,ds\,\right) 
\end{eqnarray}%
where 
\begin{equation*}
{}^{H}{\mathbb{D}}_{a^{+}}^{\alpha ,\beta ,\psi }\left[ \Psi ^{\gamma }\left( t,a\right) x_{0}\right] =0.
\end{equation*}

Applying the fractional integral $I_{a^{+}}^{\alpha ,\psi }\left( \cdot \right) $ on both sides of \thinspace {\rm{Eq.(\ref{BB})}} and using {\rm Theorem \ref{t2}}, we get 
\begin{equation*}
x(t)=\Psi ^{\gamma }\left( t,a\right) x_{0}+I_{a^{+}}^{\alpha ,\psi }f\left( s,x(s),\frac{1}{\Gamma (\alpha )}\int_{a}^{s}\mathcal{W}_{\psi }^{\alpha }\left( s,\sigma ,x\left( \sigma \right) \right) \,d\sigma \right) .
\end{equation*}

Let $x\in C_{\xi }\left( I,\mathbb{R}^{n}\right) $ and consider the following operator, $S$, given by 
\begin{eqnarray*}
&&(Sx)(t) \notag \\&=&\Psi ^{\gamma }\left( t,a\right) x_{0}+I_{a^{+}}^{\alpha ,\psi }f\left( s,x(s),\frac{1}{\Gamma (\alpha )}\int_{a}^{s}\mathcal{W}_{\psi }^{\alpha }\left( s,\sigma ,x\left( \sigma \right) \right) \,d\sigma \right) \,- 
\notag \\
&&I_{a^{+}}^{\alpha ,\psi }f\left( s,0,\frac{1}{\Gamma (\alpha )}\int_{a}^{s}\mathcal{W}_{\psi }^{\alpha }\left( s,\sigma ,0\right) d\sigma \right)+I_{a^{+}}^{\alpha ,\psi }f\left( s,0,\frac{1}{\Gamma (\alpha )}\int_{a}^{s}\mathcal{W}_{\psi }^{\alpha }\left( s,\sigma ,0\right) \,d\sigma \right)
\end{eqnarray*}
for $t\in I$.

The proof of the $S\left( (C_{\xi }\left( I,\mathbb{R}^{n}\right),\left\Vert \left( \cdot \right) \right\Vert _{\xi ,\infty }\right) \subset\left( C_{\xi }(\left( I,\mathbb{R}^{n}\right) ,\left\Vert \left( \cdot\right) \right\Vert _{\xi ,\infty }\right) $ and that $S$ is a contraction, is realized with small and appropriated modifications from the proof of the {\rm Theorem \ref{Theo1}}.
\end{proof}

The next theorems, we will investigate the estimation of the solution of the nonlinear fractional Volterra integral equation  and the nonlinear fractional 
integrodifferential equation. Then, we first carried out the estimation investigation for the nonlinear fractional Volterra integral equation.

\begin{teorema}\label{Theo3} Suppose the functions $f,k$ in {\rm{Eq.(\ref{1.1})}} satisfying the conditions 
\begin{equation}
\left\vert f\left( t,u,v\right) -f\left( t,\overline{u},\overline{v}\right) \right\vert \leq N\left( \left\vert u-\overline{u}\right\vert +\left\vert v- \overline{v}\right\vert \right)   \label{AB1}
\end{equation}%
and 
\begin{equation}
|k(t,\sigma ,u)-k(t,\sigma ,v)|\leq r(t,\sigma )|u-v|  \label{BA1}
\end{equation}%
where $0\leq N<1$ is a constant and $r(t,\sigma )\in C(D,\mathbb{R}_{+})$, in which $D=\{(t,\tau )\in I^{2}:a\leq \sigma \leq t<\infty \}$.

Let 
\begin{equation*}
C_{1}=\underset{t\in I}{\sup }\left\vert f\left( t,0,\frac{1}{\Gamma (\alpha )}\int_{a}^{t}\mathcal{W}_{\psi }^{\alpha }\left( t,\sigma ,0\right) \,d\sigma \right) \right\vert <\infty .
\end{equation*}
If $x(t)$, $t\in I$, is any solution of {\rm{Eq.(\ref{1.1})}}, then 
\begin{equation*}
\left\vert x(t)\right\vert \leq \left( \frac{C_{1}}{1-N}\right) \mathbb{E}_{\alpha }\left[ \frac{N}{1-N}r(t,t)(\psi (t)-\psi (a))^{\alpha }\right] 
\end{equation*}
for $t\in I$ and $\mathbb{E}_{\alpha }(\cdot )$ is an one-parameter Mittag-Leffler function.
\end{teorema}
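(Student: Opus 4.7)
The plan is to turn the fixed-point identity for $x(t)$ into a linear integral inequality in $|x(t)|$, and then finish by invoking the Gronwall-type Corollary \ref{coro1}. First, I would rewrite
\begin{equation*}
x(t)=\bigl[f(t,x(t),\tfrac{1}{\Gamma(\alpha)}\!\int_a^t\!\mathcal{W}_\psi^\alpha(t,s,x(s))\,ds)-f(t,0,\tfrac{1}{\Gamma(\alpha)}\!\int_a^t\!\mathcal{W}_\psi^\alpha(t,s,0)\,ds)\bigr]+f(t,0,\tfrac{1}{\Gamma(\alpha)}\!\int_a^t\!\mathcal{W}_\psi^\alpha(t,s,0)\,ds),
\end{equation*}
take absolute values and apply the triangle inequality, the Lipschitz hypothesis \eqref{AB1} on $f$, and then the hypothesis \eqref{BA1} on $k$ (which propagates to $\mathcal{W}_\psi^\alpha$ through the common factor $N_\psi^\alpha(t,s)$). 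Together with the bound $C_1$ this gives
\begin{equation*}
|x(t)|\leq C_1+N|x(t)|+\frac{N}{\Gamma(\alpha)}\int_a^t N_\psi^\alpha(t,s)\,r(t,s)\,|x(s)|\,ds.
\end{equation*}

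Next, since $0\leq N<1$, I would absorb the $N|x(t)|$ term on the left and divide through by $1-N$, obtaining the clean linear form
\begin{equation*}
|x(t)|\leq \frac{C_1}{1-N}+\frac{N}{(1-N)\Gamma(\alpha)}\int_a^t N_\psi^\alpha(t,s)\,r(t,s)\,|x(s)|\,ds.
\end{equation*}
This matches exactly the hypothesis of Lemma \ref{lemmA} with $v(t)=C_1/(1-N)$ (constant, hence nondecreasing and nonnegative) and $g(t)=N/[(1-N)\Gamma(\alpha)]$ (constant, hence nondecreasing). Since $v$ and $r$ satisfy the extra monotonicity assumption of Corollary \ref{coro1}, I may apply that corollary directly to conclude
\begin{equation*}
|x(t)|\leq \frac{C_1}{1-N}\,\mathbb{E}_\alpha\!\left[\tfrac{N}{(1-N)\Gamma(\alpha)}\,r(t,t)\,\Gamma(\alpha)\,(\psi(t)-\psi(a))^\alpha\right]=\frac{C_1}{1-N}\,\mathbb{E}_\alpha\!\left[\tfrac{N}{1-N}\,r(t,t)\,(\psi(t)-\psi(a))^\alpha\right],
\end{equation*}
which is the claimed estimate.

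I do not expect any serious obstacle: the only mildly delicate point is controlling the difference of the two $\mathcal{W}_\psi^\alpha$ integrals. The key observation is that $\mathcal{W}_\psi^\alpha(t,s,x(s))-\mathcal{W}_\psi^\alpha(t,s,0)=N_\psi^\alpha(t,s)[k(t,s,x(s))-k(t,s,0)]$, so the kernel $N_\psi^\alpha(t,s)$ factors out cleanly and the $k$-Lipschitz bound \eqref{BA1} produces precisely the integrand $N_\psi^\alpha(t,s)r(t,s)|x(s)|$ needed to apply the Gronwall-type inequality. The condition $N<1$ is essential solely to carry out the algebraic step of moving $N|x(t)|$ to the left-hand side.
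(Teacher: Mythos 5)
Your proposal is correct and follows essentially the same route as the paper: the same add-and-subtract decomposition of $f$, the same application of \eqref{AB1} and \eqref{BA1} to reach $|x(t)|\leq C_1+N|x(t)|+\frac{N}{\Gamma(\alpha)}\int_a^t N_\psi^\alpha(t,\sigma)r(t,\sigma)|x(\sigma)|\,d\sigma$, the same absorption of $N|x(t)|$ using $N<1$, and the same appeal to Corollary \ref{coro1}. Your explicit identification of $v$ and $g$ and the cancellation of $\Gamma(\alpha)$ is a detail the paper leaves implicit, but the argument is the same.
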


\begin{proof}
Using the fact that the solution $x(t)$ of {\rm{Eq.(\ref{1.1})}} is equivalent to {\rm{Eq.(\ref{T0})}} and the hypotheses, we have 
\begin{eqnarray}\label{3.5}
\left\vert x(t)\right\vert  &\leq &\left\vert f\left( t,0,\frac{1}{\Gamma (\alpha )}\int_{a}^{t}\mathcal{W}_{\psi }^{\alpha }\left( t,\sigma ,0\right) \,d\sigma \right) \right\vert +\left\vert f\left( t,x(t),\frac{1}{\Gamma (\alpha )}\int_{a}^{t}\mathcal{W}_{\psi }^{\alpha }\left( t,\sigma ,x\left( \sigma \right) \right) d\sigma \right) \right\vert   \notag \\ &&-\left\vert f\left( t,0,\frac{1}{\Gamma (\alpha )}\int_{a}^{t}\mathcal{W}_{\psi }^{\alpha }\left( t,\sigma ,0\right) \,d\sigma \right) \right\vert   \notag \\
&\leq &\underset{t\in I}{\sup }\left\vert f\left( t,0,\frac{1}{\Gamma (\alpha )}\int_{a}^{t}\mathcal{W}_{\psi }^{\alpha }\left( t,\sigma ,0\right) \,d\sigma \right) \right\vert + \notag \\
&&N\left\{ |x(t)|+\frac{1}{\Gamma (\alpha )}\int_{a}^{t}N_{\psi }^{\alpha }(t,\sigma )|k(t,\sigma ,x(\sigma )-k(t,\sigma ,0)|\right\}   \notag \\
&\leq &C_{1}+N\text{ }\left\vert x(t)\right\vert +\frac{N}{\Gamma (\alpha )}\int_{a}^{t}N_{\psi }^{\alpha }(t,\sigma )r(t,\sigma )|x(\sigma )|\,d\sigma.
\end{eqnarray}

From {\rm{Eq.(\ref{3.5})}} and the fact that $0\leq N<1$, we get
\begin{equation}
\left\vert x(t)\right\vert \leq \frac{C_{1}}{1-N}+\frac{N}{1-N}\frac{1}{\Gamma (\alpha )}\int_{a}^{t}\psi ^{\prime }\left( \sigma \right) \left(\psi \left( t\right) -\psi \left( \sigma \right) \right) ^{\alpha
-1}r(t,\sigma )\left\vert x(\sigma )\right\vert \,d\sigma .  \label{A1}
\end{equation}

Using Corollary \ref{coro1}, we conclude that 
\begin{equation*}
\left\vert x(t)\right\vert \leq \left( \frac{C_{1}}{1-N}\right) \mathbb{E}_{\alpha }\left[ \frac{N}{1-N}r(t,t)(\psi (t)-\psi (a))^{\alpha }\right] 
\end{equation*}%
where $\mathbb{E}_{\alpha }(\cdot )$ is an one-parameter Mittag-Leffler function.
\end{proof}

In the same way that we present the solution estimate for Eq.(\ref{1.1}), in this sense, we also present an investigation for the estimation of the solution of the integrodifferential equation Eq.(\ref{1.2}).

\begin{teorema}
Suppose the function $f$ in {\rm{Eq.(\ref{1.2})}} satisfying the condition 
\begin{equation}
\left\vert f\left( t,u,v\right) -f\left( t,\overline{u},\overline{v}\right) \right\vert \leq p(t)\left( \left\vert u-\overline{u}\right\vert +\left\vert v-\overline{v}\right\vert \right)   \label{AB3}
\end{equation}%
where $p(t)\in C\left( I,\mathbb{R}_{+}\right) $ and the function $k$ in {\rm{Eq.(\ref{1.2})}} satisfying the condition {\rm{Eq.(\ref{BA1})}}. 

Let 
\begin{equation*}
C_{2}=\underset{t\in I}{\sup }\left\vert \Psi ^{\gamma }\left( t,a\right) x_{0}+I_{a^{+}}^{\alpha ,\psi }f\left( s,0,\frac{1}{\Gamma (\alpha )}\int_{a}^{s}\mathcal{W}_{\psi }^{\alpha }\left( s,\sigma ,0\right) \,d\sigma \right)
\right\vert <\infty .
\end{equation*}

If $x(t)$, $t\in I$, is any solution of {\rm{Eq.(\ref{1.2})}}, then 
\begin{equation*}
|x(t)|\leq C_{2}\mathbb{E}_{\alpha }\left\{ p\left( t\right) \Gamma \left( \alpha \right) \mathbb{E}_{\alpha }\left[ r\left( t,t\right) \Gamma \left( \alpha \right) \left( \psi \left( t\right) -\psi \left( a\right) \right) ^{\alpha }\right]
\left( \psi \left( t\right) -\psi \left( a\right) \right) ^{\alpha }\right\}.
\end{equation*}
\end{teorema}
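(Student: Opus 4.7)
The plan is to mimic the strategy already used for Theorem \ref{Theo3} but now starting from the equivalent integral form of Eq.(\ref{1.2}) and invoking Lemma \ref{lemmaB} instead of Corollary \ref{coro1}. Recall that in the proof of the preceding theorem the authors established that any solution of Eq.(\ref{1.2}) satisfies
\begin{equation*}
x(t)=\Psi^{\gamma}(t,a)x_{0}+I_{a^{+}}^{\alpha,\psi}f\!\left(s,x(s),\frac{1}{\Gamma(\alpha)}\int_{a}^{s}\mathcal{W}_{\psi}^{\alpha}(s,\sigma,x(\sigma))\,d\sigma\right),
\end{equation*}
so my first step is to quote this equivalence and then add and subtract $I_{a^{+}}^{\alpha,\psi}f(s,0,\frac{1}{\Gamma(\alpha)}\int_{a}^{s}\mathcal{W}_{\psi}^{\alpha}(s,\sigma,0)\,d\sigma)$, taking absolute values to introduce the constant $C_{2}$ on the right-hand side.

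Next I would exploit the two Lipschitz assumptions. Applying Eq.(\ref{AB3}) pointwise inside the fractional integral yields
\begin{equation*}
|x(t)|\le C_{2}+\frac{1}{\Gamma(\alpha)}\int_{a}^{t}N_{\psi}^{\alpha}(t,s)\,p(s)\left[|x(s)|+\frac{1}{\Gamma(\alpha)}\int_{a}^{s}N_{\psi}^{\alpha}(s,\sigma)\,|k(s,\sigma,x(\sigma))-k(s,\sigma,0)|\,d\sigma\right]ds,
\end{equation*}
and then using Eq.(\ref{BA1}) on the inner difference replaces the $k$-difference by $r(s,\sigma)|x(\sigma)|$. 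After this manipulation the estimate has exactly the structure of the hypothesis of Lemma \ref{lemmaB} with $u(t)=|x(t)|$, $\widetilde{g}(t)=C_{2}$ (which is nonnegative and, being a constant, trivially nondecreasing), and the kernels $p(s)$ and $r(s,\sigma)$ as given.

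Finally I would apply Lemma \ref{lemmaB} directly to obtain
\begin{equation*}
|x(t)|\le C_{2}\,\mathbb{E}_{\alpha}\!\left\{p(t)\Gamma(\alpha)\mathbb{E}_{\alpha}\!\left[r(t,t)\Gamma(\alpha)(\psi(t)-\psi(a))^{\alpha}\right](\psi(t)-\psi(a))^{\alpha}\right\},
\end{equation*}
which is the claimed bound. The main obstacle I anticipate is purely bookkeeping: the factors of $1/\Gamma(\alpha)$ coming from the fractional integrals must be absorbed correctly so that the resulting iterated inequality lines up exactly with the statement of Lemma \ref{lemmaB} (whose stated form already hides these $\Gamma(\alpha)$ factors inside the Mittag-Leffler arguments). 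Beyond that, the argument is a direct transcription of the $\psi$-fractional Gronwall machinery built in Section 2, together with the integral reformulation furnished by Theorems \ref{t1} and \ref{t2}.
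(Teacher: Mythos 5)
Your proposal matches the paper's own proof essentially step for step: the paper likewise invokes the integral reformulation of Eq.(\ref{1.2}), adds and subtracts $I_{a^{+}}^{\alpha,\psi}f\bigl(s,0,\frac{1}{\Gamma(\alpha)}\int_{a}^{s}\mathcal{W}_{\psi}^{\alpha}(s,\sigma,0)\,d\sigma\bigr)$ to extract $C_{2}$, applies the two Lipschitz conditions to reach the iterated inequality, and concludes by Lemma \ref{lemmaB} with $u=|x|$ and $\widetilde{g}=C_{2}$. Your remark about the $1/\Gamma(\alpha)$ bookkeeping is apt, as the paper silently glosses over the same mismatch between the derived inequality and the literal hypothesis of Lemma \ref{lemmaB}.
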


\begin{proof}
Using the fact that $x(t)$ is a solution of {\rm{Eq.(\ref{1.2})}}, the hypotheses and using the {\rm Lemma \ref{lemmaB}}, we have 
\begin{eqnarray}
&&|x(t)|\notag \\ &=&\left\vert \Psi ^{\gamma }\left( t,a\right)
x_{0}+I_{a^{+}}^{\alpha ,\psi }f\left( s,x(s),\frac{1}{\Gamma (\alpha )}\int_{a}^{s}\mathcal{W}_{\psi }^{\alpha }\left( s,\sigma ,x\left( \sigma \right)\right) \,d\sigma \right) -\right.  \notag \\
&&\left. I_{a^{+}}^{\alpha ,\psi }f\left( s,0,\frac{1}{\Gamma (\alpha )}\int_{a}^{s}\mathcal{W}_{\psi }^{\alpha }\left( s,\sigma ,0\right) \,d\sigma \right)+I_{a^{+}}^{\alpha ,\psi }f\left( s,0,\frac{1}{\Gamma (\alpha )}
\int_{a}^{s}\mathcal{W}_{\psi }^{\alpha }\left( s,\sigma ,0\right) \,d\sigma \right)\right\vert \notag \\
&\leq &\left\vert \Psi ^{\gamma }\left( t,a\right) x_{0}+I_{a^{+}}^{\alpha ,\psi }f\left( s,0,\frac{1}{\Gamma (\alpha )}\int_{a}^{s}\mathcal{W}_{\psi }^{\alpha }\left( s,\sigma ,0\right) \,d\sigma \right) \right\vert +  \notag \\
&&\left\vert I_{a^{+}}^{\alpha ,\psi }f\left( s,x(s),\frac{1}{\Gamma (\alpha )}\int_{a}^{s}\mathcal{W}_{\psi }^{\alpha }\left( s,\sigma ,x\left( \sigma \right)\right) d\sigma \right) -I_{a^{+}}^{\alpha ,\psi }f\left( s,0,\frac{1}{ \Gamma (\alpha )}\int_{a}^{s}\mathcal{W}_{\psi }^{\alpha }\left( s,\sigma ,0\right)\,d\sigma \right) \right\vert   \notag \\ &\leq &C_{2}+I_{a^{+}}^{\alpha ,\psi }\left\{ p(s)\left( |x(s)|+\frac{1}{\Gamma (\alpha )}\int_{a}^{s}N_{\psi }^{\alpha }(s,\sigma )|k(s,\sigma,x(\sigma ))-k(s,\sigma ,0)|\,d\sigma \right) \right\} \notag \\
&\leq &C_{2}+I_{a^{+}}^{\alpha ,\psi }\left\{ p(s)\left( |x(s)|+\frac{1}{\Gamma (\alpha )}\int_{a}^{s}N_{\psi }^{\alpha }(s,\sigma )r(s,\sigma)|x(\sigma )|\,d\sigma \right) \right\}  \notag \\
&\leq &C_{2}\mathbb{E}_{\alpha }\left\{ p\left( t\right) \Gamma \left( \alpha \right)\mathbb{E}_{\alpha }\left[ r\left( t,t\right) \Gamma \left( \alpha \right) \left(\psi \left( t\right) -\psi \left( a\right) \right) ^{\alpha }\right] \left( \psi \left( t\right) -\psi \left( a\right) \right) ^{\alpha }\right\} .
\end{eqnarray}
\end{proof}


\section{Continuous dependence}

In this section, we present results regarding the continuous dependence of the solutions of Eq.(\ref{1.1}) and Eq.(\ref{1.2}).

Consider Eq.(\ref{1.1}) and Eq.(\ref{1.2}) and the corresponding equations 
\begin{equation}\label{4.1}
y(t)=\overline{f}\left( t,y(t),\frac{1}{\Gamma (\alpha )}\int_{a}^{t}\overline{\mathcal{W}}_{\psi }^{\alpha }\left( t,\sigma ,x\left( \sigma \right)\right) \,\,d\sigma \right)   
\end{equation}
and
\begin{equation}\label{4.2}
\left\{ 
\begin{array}{ccl}
^{H}\mathbb{D}_{a^{+}}^{\alpha ,\beta ;\psi }y(t) & = & \overline{f}\left( t,y(t),\displaystyle\frac{1}{\Gamma (\alpha )}\int_{a}^{t}\overline{\mathcal{W}}_{\psi }^{\alpha }\left( t,\sigma ,x\left( \sigma \right) \right) d\sigma \right)  \\ 
I_{a^{+}}^{1-\gamma ;\psi }y(a) & = & y_{0}
\end{array}
\right. 
\end{equation}
for $t\in I$, where $\overline{k}\in C\left( I^{2}\times \mathbb{R}^{n}, \mathbb{R}^{n}\right) $ for $a\leq s\leq t<\infty $, $\overline{f}\in C(I\times \mathbb{R}^{n}\times \mathbb{R},\mathbb{R}^{n})$.

\begin{teorema}
Suppose the functions $f,k$ in {\rm{Eq.(\ref{1.1})}} satisfying the conditions 
\begin{equation}
|f(t,u,v)-f(t,\overline{u},\overline{v})|\leq N(|u-\overline{u}|+|v-\overline{v}|)
\end{equation}%
and 
\begin{equation}
|\overline{k}(t,\sigma ,u)-\overline{k}(t,\sigma ,v)|\leq r(t,\sigma )|u-v|,
\end{equation}%
and 
\begin{equation}
\left\vert f\left( t,y(t),\frac{1}{\Gamma (\alpha )}\int_{a}^{t}\mathcal{W}_{\psi }^{\alpha }\left( t,\sigma ,x\left( \sigma \right) \right) \,d\sigma \right)-\overline{f}\left( t,y(t),\frac{1}{\Gamma (\alpha )}\int_{a}^{t}\overline{\mathcal{W}}_{\psi }^{\alpha }\left( t,\sigma ,x\left( \sigma \right) \right) \,d\sigma\right) \right\vert \leq \epsilon _{1}
\end{equation}
where $f,k$ and $\overline{f},\overline{k}$ are the functions involved in {\rm{Eq.(\ref{1.1})}} and {\rm{Eq.(\ref{4.1})}}, $\epsilon _{1}>0$ is an arbitrary small constant and $y(t)$ is a solution of {\rm{Eq.(\ref{4.1})}}. Then, the solution $x(t)$, $t\in I$, of {\rm{Eq.(\ref{1.1})}} depends continuously on the functions involved on the right hand side of {\rm{Eq.(\ref{1.1})}}.
\end{teorema}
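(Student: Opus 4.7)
The plan is to mimic the strategy used in Theorem \ref{Theo3}: bound $|x(t)-y(t)|$ by a constant multiple of $\epsilon_1$ plus a Volterra-type integral in $|x-y|$, and then apply Corollary \ref{coro1} to obtain an explicit Mittag--Leffler majorant that vanishes with $\epsilon_1$.

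First I would write the difference of the two defining equations,
\begin{equation*}
x(t)-y(t) = f\!\left(t,x(t),\tfrac{1}{\Gamma(\alpha)}\int_a^t \mathcal{W}_\psi^\alpha(t,\sigma,x(\sigma))\,d\sigma\right) - \overline{f}\!\left(t,y(t),\tfrac{1}{\Gamma(\alpha)}\int_a^t \overline{\mathcal{W}}_\psi^\alpha(t,\sigma,x(\sigma))\,d\sigma\right),
\end{equation*}
and insert the intermediate term $f\!\left(t,y(t),\tfrac{1}{\Gamma(\alpha)}\int_a^t \mathcal{W}_\psi^\alpha(t,\sigma,x(\sigma))\,d\sigma\right)$ by adding and subtracting. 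The first resulting difference is controlled by the Lipschitz hypothesis on $f$ (with constant $N$), yielding the term $N|x(t)-y(t)|$; the second difference is bounded by $\epsilon_1$ via the third hypothesis of the theorem.

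Next I would further insert $\overline{f}\!\left(t,y(t),\tfrac{1}{\Gamma(\alpha)}\int_a^t \overline{\mathcal{W}}_\psi^\alpha(t,\sigma,y(\sigma))\,d\sigma\right)$ (or alternatively work symmetrically on the $\overline{k}$-term), so that the Lipschitz condition on $\overline{k}$ with modulus $r(t,\sigma)$ and the Lipschitz condition on $f$ in the third slot together produce a contribution
\begin{equation*}
\frac{N}{\Gamma(\alpha)}\int_a^t N_\psi^\alpha(t,\sigma)\, r(t,\sigma)\,|x(\sigma)-y(\sigma)|\,d\sigma.
\end{equation*}
Collecting these pieces gives
\begin{equation*}
|x(t)-y(t)| \le \epsilon_1 + N|x(t)-y(t)| + \frac{N}{\Gamma(\alpha)}\int_a^t N_\psi^\alpha(t,\sigma)\, r(t,\sigma)\,|x(\sigma)-y(\sigma)|\,d\sigma.
\end{equation*}

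Using $0\le N<1$, I would absorb $N|x(t)-y(t)|$ on the left, obtaining
\begin{equation*}
|x(t)-y(t)| \le \frac{\epsilon_1}{1-N} + \frac{N}{(1-N)\Gamma(\alpha)}\int_a^t N_\psi^\alpha(t,\sigma)\, r(t,\sigma)\,|x(\sigma)-y(\sigma)|\,d\sigma,
\end{equation*}
which is precisely the hypothesis of Corollary \ref{coro1} with $v(t)=\epsilon_1/(1-N)$, $g(t)=N/[(1-N)\Gamma(\alpha)]\cdot\Gamma(\alpha)$-normalized appropriately, and the kernel $r$. Applying that corollary yields
\begin{equation*}
|x(t)-y(t)| \le \frac{\epsilon_1}{1-N}\,\mathbb{E}_\alpha\!\left[\frac{N}{1-N}\,r(t,t)\,(\psi(t)-\psi(a))^\alpha\right],
\end{equation*}
from which continuous dependence follows, since the right-hand side tends to $0$ as $\epsilon_1\to 0$, uniformly on compact subintervals of $I$.

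The main obstacle I anticipate is bookkeeping in the double insertion of intermediate terms so that the residual $\epsilon_1$ estimate of the hypothesis applies verbatim (the hypothesis compares $f$ and $\overline{f}$ at the argument $y(t)$ with $x(\sigma)$ inside the integral, a slightly asymmetric choice, so the intermediate term must match that exact configuration). Once the triangle inequality is organized to exploit this compatibility, the rest is a routine absorption followed by the Gronwall-type Corollary \ref{coro1}.
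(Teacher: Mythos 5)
Your proposal follows essentially the same route as the paper: add and subtract the intermediate $f$-evaluation so that the $\epsilon_1$ hypothesis and the Lipschitz conditions on $f$ and the kernel yield $u(t)\le \epsilon_1 + N\,u(t) + \frac{N}{\Gamma(\alpha)}\int_a^t N_{\psi}^{\alpha}(t,\sigma)\,r(t,\sigma)\,u(\sigma)\,d\sigma$ with $u=|x-y|$, absorb $N\,u(t)$ using $0\le N<1$, and conclude with Corollary \ref{coro1}. Your explicit second insertion is just a more careful rendering of the bookkeeping the paper compresses into one add-and-subtract step, and the final Mittag--Leffler bound is identical.
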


\begin{proof}
Let $x(t)$ and $y(t)$ the solutions of {\rm{Eq.(\ref{1.1})}} and {\rm{Eq.(\ref{4.1})}}, respectively, and using hypotheses, we have
\begin{eqnarray}
u(t) &=&\left\vert x\left( t\right) -y\left( t\right) \right\vert   \notag
\label{eq32} \\
&=&\left\vert 
\begin{array}{c}
f\left( t,x(t),\displaystyle\frac{1}{\Gamma (\alpha )}\int_{a}^{t}\mathcal{W}%
_{\psi }^{\alpha }\left( t,\sigma ,x\left( \sigma \right) \right) \,d\sigma
\right)  \\ 
-f\left( t,y(t),\displaystyle\frac{1}{\Gamma (\alpha )}\int_{a}^{t}\mathcal{W%
}_{\psi }^{\alpha }\left( t,\sigma ,x\left( \sigma \right) \right) \,d\sigma
\right)  \\ 
+f\left( t,y(t),\displaystyle\frac{1}{\Gamma (\alpha )}\int_{a}^{t}\mathcal{W}_{\psi
}^{\alpha }\left( t,\sigma ,x\left( \sigma \right) \right) \,d\sigma \right) 
\\ 
-f\left( t,y(t),\displaystyle\frac{1}{\Gamma (\alpha )}\int_{a}^{t}\overline{%
\mathcal{W}}_{\psi }^{\alpha }\left( t,\sigma ,x\left( \sigma \right)
\right) \,d\sigma \right) 
\end{array}%
\right\vert   \notag \\
&\leq &\varepsilon _{1}+\left\vert 
\begin{array}{c}
f\left( t,x(t),\displaystyle\frac{1}{\Gamma (\alpha )}\int_{a}^{t}\mathcal{W}%
_{\psi }^{\alpha }\left( t,\sigma ,x\left( \sigma \right) \right) \,d\sigma
\right)  \\ 
-f\left( t,y(t),\displaystyle\frac{1}{\Gamma (\alpha )}\int_{a}^{t}\mathcal{W%
}_{\psi }^{\alpha }\left( t,\sigma ,x\left( \sigma \right) \right) \,d\sigma
\right) 
\end{array}%
\right\vert   \notag \\
&\leq &\varepsilon _{1}+N\left\{ |x(t)-y(t)|+\frac{1}{\Gamma (\alpha )}%
\int_{a}^{t}N_{\psi }^{\alpha }(t,\sigma )|k(t,\sigma ,x(\sigma
))-k(t,\sigma ,y(\sigma ))|\,d\sigma \right\}   \notag \\
&\leq &\varepsilon _{1}+N\left\{ |x(t)-y(t)|+\displaystyle\frac{1}{\Gamma
(\alpha )}\int_{a}^{t}N_{\psi }^{\alpha }(t,\sigma )r(t,\sigma )|x(\sigma
)-y(\sigma )|\,d\sigma \right\}   \notag \\
&=&\varepsilon _{1}+N\left\{ u(t)+\displaystyle\frac{1}{\Gamma (\alpha )}%
\int_{a}^{t}N_{\psi }^{\alpha }(t,\sigma )r(t,\sigma )u(\sigma )\,d\sigma
\right\} .
\end{eqnarray}

Note that, by Eq.(\ref{eq32}) and using the assumption $0\leq N<1$, we get
\begin{equation}
u(t)\leq \frac{\epsilon _{1}}{1-N}+\frac{N}{1-N}\left\{ \frac{1}{\Gamma (\alpha )}\int_{a}^{t}N_{\psi }^{\alpha }(t,\sigma )r(t,\sigma )u(\sigma )\,d\sigma \right\}.  \label{4.4}
\end{equation}

Now, by means of Corollary 1 we rewrite \rm{Eq.(\ref{4.4})} in the following form 
\begin{equation}
|x(t)-y(t)|\leq \left( \frac{\epsilon _{1}}{1-N}\right) \mathbb{E}_{\alpha }\left[ \frac{N}{1-N}r(t,t)(\psi (t)-\psi (\sigma ))^{\alpha }\right],\label{4.5}
\end{equation}
where $\mathbb{E}_{\alpha }(\cdot )$ is an one-parameter Mittag-Leffler function.

From \rm{Eq.(\ref{4.5})} it follows that the solution of \rm{Eq.(\ref{1.1})} depends continuously on the functions involved on the right hand side of \rm{Eq.(\ref{1.1})}.
\end{proof}

\begin{teorema}\label{ABC} Suppose the functions $f$ and $k$ in {\rm{Eq.(\ref{1.2})}} satisfying the conditions {\rm{Eq.(\ref{AB1})}} and {\rm{Eq.(\ref{BA1})}}. Furthermore, suppose that 
\begin{equation}
\left\vert \Psi ^{\gamma }\left( t,a\right) x_{0}-\Psi ^{\gamma }\left( t,a\right) y_{0}\right\vert +I_{a^{+}}^{\alpha ,\psi }\left( \left\vert 
\begin{array}{c}
f\left( s,y(s),\displaystyle\frac{1}{\Gamma (\alpha )}\int_{a}^{s}\mathcal{W}_{\psi }^{\alpha }\left( s,\sigma ,y\left( \sigma \right) \right) \,d\sigma \right) - \\ \overline{f}\left( s,y(s),\displaystyle\frac{1}{\Gamma (\alpha )}\int_{a}^{s}\overline{\mathcal{W}} _{\psi }^{\alpha }\left( s,\sigma ,y\left( \sigma \right) \right) \,d\sigma\right) 
\end{array}%
\right\vert \right) \leq \varepsilon _{2}
\end{equation}
where $f,k$ and $\overline{f},\overline{k}$ are functions involved in {\rm{Eq.(\ref{1.2})}} and {\rm{Eq.(\ref{4.2})}}, $\varepsilon _{2}>0$ is an arbitrary small constant and $y(t)$ is a solution of {\rm{
Eq.(\ref{4.2})}}. Then, the solution $x(t)$, $t\in I$ of {\rm{Eq.(\ref{1.2})}} depends continuously on the functions in right hand side of {\rm{Eq.(\ref{1.2})}}.
\end{teorema}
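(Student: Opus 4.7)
The strategy parallels the previous continuous-dependence theorem, but now in the integrodifferential setting we must first pass to an equivalent integral form. I would start by invoking the equivalence established inside the proof of the second existence theorem (based on Theorems \ref{t1} and \ref{t2}): if $x(t)$ solves Eq.(\ref{1.2}) and $y(t)$ solves Eq.(\ref{4.2}), then
\begin{equation*}
x(t)=\Psi ^{\gamma }(t,a)x_{0}+I_{a^{+}}^{\alpha ,\psi }f\!\left( s,x(s),\tfrac{1}{\Gamma (\alpha )}\!\int_{a}^{s}\!\mathcal{W}_{\psi }^{\alpha }(s,\sigma ,x(\sigma ))\,d\sigma \right),
\end{equation*}
\begin{equation*}
y(t)=\Psi ^{\gamma }(t,a)y_{0}+I_{a^{+}}^{\alpha ,\psi }\overline{f}\!\left( s,y(s),\tfrac{1}{\Gamma (\alpha )}\!\int_{a}^{s}\!\overline{\mathcal{W}}_{\psi }^{\alpha }(s,\sigma ,y(\sigma ))\,d\sigma \right).
\end{equation*}

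Next, I would set $u(t)=|x(t)-y(t)|$, subtract the two identities, and insert the intermediate term $f\!\left(s,y(s),\tfrac{1}{\Gamma (\alpha )}\int_{a}^{s}\mathcal{W}_{\psi }^{\alpha }(s,\sigma ,y(\sigma ))\,d\sigma \right)$ so that the difference splits cleanly into a \emph{Lipschitz piece} (same $f$, differing arguments) and a \emph{perturbation piece} (differing $f,\overline{f}$ and $\mathcal{W},\overline{\mathcal{W}}$ at the common argument $y$). Collecting the perturbation piece with the initial-data discrepancy $|\Psi ^{\gamma }(t,a)x_{0}-\Psi ^{\gamma }(t,a)y_{0}|$ puts exactly the quantity controlled by the hypothesis on one side, so that part contributes at most $\varepsilon _{2}$. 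For the Lipschitz piece, (AB1) gives a factor $N$ multiplying $|x(s)-y(s)|+\tfrac{1}{\Gamma (\alpha )}\int_{a}^{s}N_{\psi }^{\alpha }(s,\sigma )|k(s,\sigma ,x(\sigma ))-k(s,\sigma ,y(\sigma ))|\,d\sigma$, and then (BA1) replaces the inner integrand by $r(s,\sigma )u(\sigma )$. This yields the double-integral inequality
\begin{equation*}
u(t)\leq \varepsilon _{2}+\int_{a}^{t}N_{\psi }^{\alpha }(t,s)\,\tfrac{N}{\Gamma (\alpha )}\left[ u(s)+\tfrac{1}{\Gamma (\alpha )}\int_{a}^{s}N_{\psi }^{\alpha }(s,\sigma )r(s,\sigma )u(\sigma )\,d\sigma \right]ds.
\end{equation*}

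I would then invoke Lemma \ref{lemmaB} with $\widetilde{g}(t)\equiv \varepsilon _{2}$ (constant, hence nonnegative and nondecreasing) and $p(\tau )=N/\Gamma (\alpha )$ (after absorbing the $\Gamma (\alpha )$ factors so the nested kernel matches the form in the lemma). The lemma gives
\begin{equation*}
|x(t)-y(t)|\leq \varepsilon _{2}\,\mathbb{E}_{\alpha }\!\left\{ N\,\mathbb{E}_{\alpha }\!\left[ r(t,t)\Gamma (\alpha )(\psi (t)-\psi (a))^{\alpha }\right](\psi (t)-\psi (a))^{\alpha }\right\},
\end{equation*}
from which letting $\varepsilon _{2}\to 0$ establishes the claimed continuous dependence on $x_{0}$, $f$ and $k$.

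\textbf{Main obstacle.} The routine algebra is clean; the only delicate step is the bookkeeping needed to align the derived inequality with the exact template of Lemma \ref{lemmaB}. In particular, one must correctly track the $1/\Gamma (\alpha )$ factors appearing through $I_{a^{+}}^{\alpha ,\psi }$ and through the inner convolution, and verify that the "perturbation plus initial-data" bundle indeed fits under the single $\varepsilon _{2}$ bound supplied by the hypothesis. Once those identifications are made, the rest is a direct application of the lemma.
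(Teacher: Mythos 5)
Your plan is correct and follows essentially the same route as the paper: rewrite both problems in the equivalent integral form via Theorems \ref{t1}--\ref{t2}, add and subtract the intermediate term $f\left(s,y(s),\cdot\right)$ so that the perturbation piece together with the initial-data discrepancy is absorbed into the $\varepsilon_{2}$ hypothesis, and then apply Lemma \ref{lemmaB} to the resulting nested inequality. The only (harmless) difference is that you carry the constant $N$ from Eq.(\ref{AB1}) through to the final Mittag-Leffler bound, whereas the paper's own proof writes the Lipschitz factor as $p(s)$ and states the bound with $p(t)\Gamma(\alpha)$; your version is actually the more consistent with the theorem's stated hypotheses.
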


\begin{proof}
Let $x(t)$ and $y(t)$ the solutions of \textrm{Eq.(\ref{1.2})} and \textrm{Eq.(\ref{4.2})} and using the hypotheses we have 
\begin{eqnarray*}
u(t) &=&|x(t)-y(t)|  \notag \\
&=&\left\vert \Psi ^{\gamma }\left( t,a\right) x_{0}\right.
+I_{a^{+}}^{\alpha ,\psi }f\left( s,x(s),\frac{1}{\Gamma (\alpha )}\int_{a}^{s}W_{\psi }^{\alpha }\left( s,\sigma ,x\left( \sigma \right)\right) d\sigma \right) -  \notag \\
&&\Psi ^{\gamma }\left( t,a\right) y_{0}-\left. I_{a^{+}}^{\alpha ,\psi }\overline{f}\left( s,y(s),\frac{1}{\Gamma (\alpha )}\int_{a}^{s}W_{\psi}^{\alpha }\left( s,\sigma ,y\left( \sigma \right) \right) \,d\sigma \right)
\right\vert.
\end{eqnarray*}

As above, we add and subtract an adequate term, using $\varepsilon _{2}$ and Lemma \ref{lemmaB}, we have
\begin{eqnarray*}
u\left( t\right)  &\leq &\varepsilon _{2}+I_{a^{+}}^{\alpha ,\psi }\left\{
p(s)\left( 
\begin{array}{c}
|x(s)-y(s)| \\ 
+\displaystyle\frac{1}{\Gamma (\alpha )}\int_{a}^{s}N_{\psi }^{\alpha }(s,\sigma
)k(s,\sigma ,x(\sigma ))-k(s,\sigma ,y(\sigma ))\,d\sigma 
\end{array}%
\right) \right\}  \\
&\leq &\varepsilon _{2}+I_{a^{+}}^{\alpha ,\psi }\left\{ p(s)\left( u(s)+%
\displaystyle\frac{1}{\Gamma (\alpha )}\displaystyle\int_{a}^{s}N_{\psi }^{\alpha }(s,\sigma
)r(t,\sigma )u(\sigma )\,d\sigma \right) \right\}  \\
&=&\varepsilon _{2}+I_{a^{+}}^{\alpha ,\psi }\left\{ p(s)u(s)+\frac{p(s)}{%
\Gamma (\alpha )}\int_{a}^{s}N_{\psi }^{\alpha }(s,\sigma )r(t,\sigma
)u(\sigma )\,d\sigma \right\}  \\
&=&\varepsilon _{2}\mathbb{E}_{\alpha }\left\{ p\left( t\right) \Gamma
\left( \alpha \right) \mathbb{E}_{\alpha }\left[ r\left( t,t\right) \Gamma
\left( \alpha \right) \left( \psi \left( t\right) -\psi \left( a\right)
\right) ^{\alpha }\right] \left( \psi \left( t\right) -\psi \left( a\right)
\right) ^{\alpha }\right\} .
\end{eqnarray*}
\end{proof}

Now, we consider the following system involving a nonlinear fractional Volterra integral and a nonlinear fractional Volterra integrodifferential equations 
\begin{equation}
z(t)=h\left( t,z(t),\frac{1}{\Gamma (\alpha )}\int_{a}^{t}N_{\psi }^{\alpha }(t,\sigma )g(t,\sigma ,z(\sigma ))\,d\sigma \,,\mu \right)   \label{4.8}
\end{equation}
and 
\begin{equation}
z(t)=h\left( t,z(t),\displaystyle\frac{1}{\Gamma (\alpha )}\int_{a}^{t}N_{\psi }^{\alpha }(t,\sigma )g(t,\sigma ,z(\sigma ))\,d\sigma \,,\mu _{0}\right)   \label{4.9}
\end{equation}%
and
\begin{equation}
\left\{ 
\begin{array}{ccl}\label{eq51}
{}^{H}\mathbb{D}_{a^{+}}^{\alpha ,\beta ,\psi }z(t) & = & h\left( t,z(t),\displaystyle\frac{1}{\Gamma (\alpha )}\int_{a}^{t}N_{\psi }^{\alpha }(t,\sigma)g(t,\sigma ,z(\sigma ))\,d\sigma \,,\mu \right)  \\ 
I_{a^{+}}^{1-\gamma ,\psi }z(a) & = & z_{0}
\end{array}%
\right. 
\end{equation}
and 
\begin{equation}\label{eq52}
\left\{ 
\begin{array}{ccl}
{}^{H}\mathbb{D}_{a^{+}}^{\alpha ,\beta ,\psi }z(t) & = & h\left( t,z(t), \displaystyle\frac{1}{\Gamma (\alpha )}\int_{a}^{t}N_{\psi }^{\alpha }(t,\sigma )g(t,\sigma ,z(\sigma ))\,d\sigma \,,\mu _{0}\right)  \\ 
I_{a^{+}}^{1-\gamma ,\psi }z(a) & = & z_{0}
\end{array}
\right. 
\end{equation}%
for $t\in I$, where $g\in C(I^{2}\times \mathbb{R}^{n},\mathbb{R}^{n})$, $a\leq \sigma \leq t<\infty $ and $h\in C(I\times \mathbb{R}^{n}\times \mathbb{R}^{n}\times \mathbb{R},\mathbb{R}^{n})$.

\begin{teorema}\label{teo10}
Suppose the functions $h,g$ in {\rm{Eq.(\ref{4.8})}} and {\rm{Eq.(\ref{4.9})}} satisfying the conditions 
\begin{equation} \label{4.12}
|h(t,u,v,\mu )-h(t,\overline{u},\overline{v},\mu )|\leq \overline{N}(|u-\overline{u}|+|v-\overline{v}|) 
\end{equation}
\begin{equation}\label{4.13}
|h(t,u,v,\mu )-h(t,u,v,\mu _{0})|\leq q(t)|\mu -\mu _{0}|  
\end{equation}
\begin{equation}\label{4.14}
|g(t,\sigma ,u)-g(t,\sigma ,v)|\leq \overline{r}(t,\sigma )|u-v|
\end{equation} 
where $0\leq \overline{N}<1$ is a constant, $q\in C(I,\mathbb{R}_{+})$ such that $q(t)\leq Q<\infty $, $Q$ is a constant and $\overline{r}(t,\sigma )\in C(D,\mathbb{R}_{+})$ in which $D$ is defined as in {\rm Lemma \ref{lemmA}.} Let $z_{1}(t)$ and $z_{2}(t)$ be the solutions of {\rm{Eq.(\ref{4.8})}} and {\rm{Eq.(\ref{4.9})}}, respectively.

Then, 
\begin{equation}
|z_{1}(t)-z_{2}(t)|\leq Q\frac{|\mu -\mu _{0}|}{1-\overline{N}}\mathbb{E}_{\alpha }\left[ \frac{\overline{N}}{1-\overline{N}}r(t,t)(\psi (t)-\psi (a))^{\alpha }\right] 
\end{equation}
where $\mathbb{E}_{\alpha }(\cdot )$ is an one-parameter Mittag-Leffler function.
\end{teorema}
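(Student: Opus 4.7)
The plan is to mimic the strategy used in Theorem \ref{Theo3}, since the structure of the bound (a Mittag--Leffler factor multiplied by $|\mu-\mu_0|/(1-\overline{N})$) matches what Corollary \ref{coro1} produces after we reduce the problem to a linear fractional Gronwall-type inequality. Set $u(t):=|z_1(t)-z_2(t)|$; the goal is to derive an inequality of the form \eqref{eq12} (or \eqref{A1}) for $u(t)$ with constant source $Q|\mu-\mu_0|/(1-\overline{N})$, then invoke Corollary \ref{coro1}.

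First I would use the defining relations \eqref{4.8} and \eqref{4.9} to write
\[
u(t) = \Bigl| h\bigl(t,z_1(t),\tfrac{1}{\Gamma(\alpha)}\!\int_a^t N_\psi^\alpha(t,\sigma)g(t,\sigma,z_1(\sigma))d\sigma,\mu\bigr) - h\bigl(t,z_2(t),\tfrac{1}{\Gamma(\alpha)}\!\int_a^t N_\psi^\alpha(t,\sigma)g(t,\sigma,z_2(\sigma))d\sigma,\mu_0\bigr)\Bigr|.
\]
Then I would add and subtract the auxiliary term $h\bigl(t,z_2(t),\tfrac{1}{\Gamma(\alpha)}\!\int_a^t N_\psi^\alpha(t,\sigma)g(t,\sigma,z_2(\sigma))d\sigma,\mu\bigr)$, apply the triangle inequality, and bound the two resulting pieces by \eqref{4.12} and \eqref{4.13} respectively. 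Using \eqref{4.14} on the inner integral and the uniform bound $q(t)\le Q$, this yields
\[
u(t)\le Q|\mu-\mu_0|+\overline{N}\Bigl\{u(t)+\tfrac{1}{\Gamma(\alpha)}\!\int_a^t N_\psi^\alpha(t,\sigma)\overline{r}(t,\sigma)u(\sigma)d\sigma\Bigr\}.
\]

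Next, because $0\le \overline{N}<1$, I would isolate $u(t)$ on the left, exactly as in the transition from \eqref{3.5} to \eqref{A1}, obtaining
\[
u(t)\le \frac{Q|\mu-\mu_0|}{1-\overline{N}}+\frac{\overline{N}}{1-\overline{N}}\cdot\frac{1}{\Gamma(\alpha)}\int_a^t N_\psi^\alpha(t,\sigma)\,\overline{r}(t,\sigma)u(\sigma)\,d\sigma.
\]
This is precisely the hypothesis of Corollary \ref{coro1} with the nondecreasing choices $v(t)\equiv Q|\mu-\mu_0|/(1-\overline{N})$, $g(t)\equiv \overline{N}/((1-\overline{N})\Gamma(\alpha))$, and $r(t,\sigma)=\overline{r}(t,\sigma)$. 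Applying the corollary immediately gives the desired estimate.

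The only delicate step is the add-and-subtract: one has to insert a term that differs from $h(t,z_1,\ldots,\mu)$ only in the state slot and from $h(t,z_2,\ldots,\mu_0)$ only in the parameter slot, so that \eqref{4.12} and \eqref{4.13} apply independently; everything else is routine bookkeeping of constants before invoking Corollary \ref{coro1}. No new inequality is needed, because the parameter perturbation contributes only a constant source term which is absorbed cleanly by the $1-\overline{N}$ factor, mirroring the structure already used in Theorem \ref{Theo3}.
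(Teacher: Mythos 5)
Your proposal is correct and follows essentially the same route as the paper: the same add-and-subtract of the auxiliary term $h\bigl(t,z_{2}(t),\cdot\,,\mu\bigr)$, the same use of \eqref{4.12}--\eqref{4.14} and $q(t)\leq Q$ to reach the inequality $z(t)\leq Q|\mu-\mu_{0}|+\overline{N}\{z(t)+\cdots\}$, the same division by $1-\overline{N}$, and the same final appeal to Corollary \ref{coro1}. Nothing is missing.
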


\begin{proof}
Let $x(t)$ and $y(t)$ the solutions of {\rm{Eq.(\ref{4.8})}} and {\rm{Eq.(\ref{4.9})}}, for $t\in I$, and the hypotheses, we have 
\begin{eqnarray*}
z(t) &=&|z_{1}(t)-z_{2}(t)| \\
&=&\left\vert h\left( t,z_{1}(t),\frac{1}{\Gamma (\alpha )}\int_{a}^{t}N_{\psi }^{\alpha }(t,\sigma )g(t,\sigma ,z_{1}(\sigma ))\,d\sigma \,,\mu \right) \right. - \\
&&h\left( t,z_{2}(t),\frac{1}{\Gamma (\alpha )}\int_{a}^{t}N_{\psi }^{\alpha }(t,\sigma )g(t,\sigma ,z_{2}(\sigma ))\,d\sigma \,,\mu \right) + \\
&&h\left( t,z_{2}(t),\frac{1}{\Gamma (\alpha )}\int_{a}^{t}N_{\psi }^{\alpha }(t,\sigma )g(t,\sigma ,z_{2}(\sigma ))\,d\sigma \,,\mu \right) + \\
&&\left. h\left( t,z_{2}(t),\frac{1}{\Gamma (\alpha )}\int_{a}^{t}N_{\psi }^{\alpha }(t,\sigma )g(t,\sigma ,z_{2}(\sigma ))\,d\sigma \,,\mu _{0}\right) \right\vert .
\end{eqnarray*}

Proceeding as in Theorem \ref{ABC}, can write 
\begin{eqnarray}
z(t) &\leq &\overline{N}\left\{ |z_{1}(t)-z_{2}(t)|+\frac{1}{\Gamma (\alpha )%
}\int_{a}^{t}N_{\psi }^{\alpha }|g(t,\sigma ,z_{1}(\sigma ))-g(t,\sigma
,z_{2}(\sigma ))|\,d\sigma \right\}   \notag  \label{4.16} \\
&&+q(t)|\mu -\mu _{0}|  \notag \\
&=&\overline{N}\left\{ z(t)+\frac{1}{\Gamma (\alpha )}\int_{a}^{t}N_{\psi
}^{\alpha }\overline{r}(t,\sigma )z(\sigma )\,d\sigma \right\} +Q|\mu -\mu
_{0}|
\end{eqnarray}

As $0\leq \overline{N}<1$, {\rm{Eq.(\ref{4.16})}} can be rewritten as follows 
\begin{equation}
z(t)\leq \frac{Q|\mu -\mu _{0}|}{1-\overline{N}}+\frac{\overline{N}}{1-\overline{N}}\frac{1}{\Gamma (\alpha )}\int_{a}^{t}N_{\psi }^{\alpha }(t,\sigma )\overline{r}(t,\sigma )z(\sigma )\,d\sigma .  \label{4.17}
\end{equation}

Using Corollary \ref{coro1} in {\rm{Eq.(\ref{4.17})}}, we conclude that 
\begin{equation*}
|z_{1}(t)-z_{2}(t)|\leq \frac{Q|\mu -\mu _{0}|}{1-\overline{N}}\mathbb{E}_{\alpha }\left[ \frac{\overline{N}}{1-\overline{N}}r(t,t)(\psi (t)-\psi(a))^{\alpha }\right] 
\end{equation*}
where $\mathbb{E}_{\alpha }(\cdot )$ is an one-parameter Mittag-Leffler function. 
\end{proof}

\begin{teorema}
Suppose the functions $h,g$ in {\rm{Eq.(\ref{eq51})}} and {\rm{Eq.(\ref{eq52})}}, satisfying the conditions {\rm{Eq.(\ref{4.12})}}-{\rm{Eq.(\ref{4.14})}} with $\overline{p}(t)$ in the place of $\overline{N}$ in {\rm{Eq.(\ref{4.12})}}, where $\overline{p}\in C(I,\mathbb{R}_{+})$ and the function $q(t)$ in {\rm{Eq.(\ref{4.13})}} be such that 
\begin{equation*}
\frac{1}{\Gamma(\alpha)}\int_{a}^{t}N^{\alpha}_{\psi}(t,s) q(s)\,ds\leq \overline{Q}<\infty 
\end{equation*}
where $\overline{Q}$ is a constant. Let $z_{1}(t)$ and $z_{2}(t)$ be the solutions of {\rm{Eq.(\ref{eq51})}} and {\rm{Eq.(\ref{eq52})}} Then, 
\begin{equation*}
|z_{1}(t)-z_{2}(t)|\leq Q|\mu -\mu _{0}|\mathbb{E}_{\alpha }\left\{ \overline{p}\left( t\right) \Gamma \left( \alpha \right) \mathbb{E}_{\alpha }\left( \overline{r}\left( t,t\right) \Gamma \left( \alpha \right) \left( \psi \left( t\right)-\psi \left( a\right) \right) ^{\alpha }\right) \left( \psi \left( t\right) -\psi \left( a\right) \right) ^{\alpha }\right\}.
\end{equation*}
\end{teorema}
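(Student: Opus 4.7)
The plan is to mirror the strategy used earlier for Theorem \ref{ABC}, but with the scalar Lipschitz constant replaced by the function $\bar p(t)$ so that Lemma \ref{lemmaB} can be applied at the end instead of Corollary \ref{coro1}. First I would convert (\ref{eq51}) and (\ref{eq52}) into equivalent Volterra integral equations by applying $I_{a^{+}}^{\alpha,\psi}$ to both sides and using Theorems \ref{t1} and \ref{t2}, obtaining
\[
z_i(t)=\Psi^{\gamma}(t,a)z_{0}+I_{a^{+}}^{\alpha,\psi}h\!\left(s,z_i(s),\tfrac{1}{\Gamma(\alpha)}\!\int_{a}^{s}\!N_{\psi}^{\alpha}(s,\sigma)g(s,\sigma,z_i(\sigma))\,d\sigma,\mu_i\right),
\]
for $i=1,2$, where $\mu_1=\mu$ and $\mu_2=\mu_{0}$. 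Since both problems carry the same initial data $z_{0}$, the term $\Psi^{\gamma}(t,a)z_{0}$ cancels in the difference $u(t):=|z_1(t)-z_2(t)|$.

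Next I would set $u(t):=|z_1(t)-z_2(t)|$ and inside the remaining $I_{a^{+}}^{\alpha,\psi}$ add and subtract the intermediate quantity
\[
h\!\left(s,z_2(s),\tfrac{1}{\Gamma(\alpha)}\!\int_{a}^{s}\!N_{\psi}^{\alpha}(s,\sigma)g(s,\sigma,z_2(\sigma))\,d\sigma,\mu\right).
\]
Applying the triangle inequality together with the Lipschitz bound (\ref{4.12}) in the $(u,v)$ arguments, the bound (\ref{4.13}) in the parameter $\mu$, and then (\ref{4.14}) inside the inner integral, followed by the monotonicity hypothesis $\frac{1}{\Gamma(\alpha)}\int_{a}^{t}N_{\psi}^{\alpha}(t,s)q(s)\,ds\le\overline{Q}$, leads to
\[
u(t)\le\overline{Q}\,|\mu-\mu_{0}|+I_{a^{+}}^{\alpha,\psi}\!\left\{\bar p(s)\!\left[u(s)+\tfrac{1}{\Gamma(\alpha)}\!\int_{a}^{s}\!N_{\psi}^{\alpha}(s,\sigma)\bar r(s,\sigma)u(\sigma)\,d\sigma\right]\right\}.
\]

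Finally, I would invoke Lemma \ref{lemmaB} with $\widetilde g(t)=\overline{Q}\,|\mu-\mu_{0}|$ (a nonnegative constant, hence nondecreasing), $p(t)=\bar p(t)$ and $r(t,\sigma)=\bar r(t,\sigma)$, which delivers exactly the Mittag--Leffler estimate claimed in the theorem (with the understanding that the symbol $Q$ in the statement is the constant $\overline{Q}$ from the hypothesis). The main obstacle is purely bookkeeping: verifying that after the add--and--subtract step the resulting nested inequality sits precisely in the template required by Lemma \ref{lemmaB}, in particular that the kernels $N_{\psi}^{\alpha}(t,\cdot)$ and the inner $N_{\psi}^{\alpha}(s,\cdot)$ are matched correctly and that the constant $\overline{Q}|\mu-\mu_{0}|$ really qualifies as a nonnegative nondecreasing $\widetilde g$; once this structural identification is in place, the conclusion is immediate from the lemma.
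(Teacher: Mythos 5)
Your proposal is correct and follows exactly the route the paper intends: the paper's own ``proof'' of this theorem is the single line ``As before in Theorem \ref{teo10},'' i.e.\ repeat the add-and-subtract argument of Theorems \ref{ABC} and \ref{teo10}, absorb the parameter term via the hypothesis $\frac{1}{\Gamma(\alpha)}\int_a^t N_\psi^\alpha(t,s)q(s)\,ds\le\overline{Q}$, and close with Lemma \ref{lemmaB}. You have simply written out the details the paper omits, including the correct reading of the statement's $Q$ as $\overline{Q}$.
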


\begin{proof}
As before in Theorem \ref{teo10}.
\end{proof}

\section{Concluding remarks}

We conclude this article with the objectives achieved, that is, we carry out a brief study on existence, uniqueness, solution estimate, and 
continuous dependence of solutions of the nonlinear fractional Volterra integral equation, Eq.(\ref{1.1}), and the nonlinear fractional integrodifferential equation, Eq.(\ref{1.2}). For this end, we introduce the metric, Eq.(\ref{AA1}) and the norm, Eq.(\ref{AA2}), as well as Lemma \ref{lemmA}, Lemma \ref{lemmaB} and Corollary \ref{coro1}, which are fundamental to obtain our main results. In this sense, we contribute to the growth of the fractional calculus, particularly in the case of fractional differential equations and fractional integral equations, especially involving a recent and general formulation of the fractional derivative, the so-called $\Psi$-Hilfer fractional derivative. However, as seen in the text, there are numerous types of differential equations, integral equations and consequently problems that should be investigated. We emphasize that one of the problems that deserves special mention comes from the impulsive equations, which will be object of studies whose results will be published in a future work.
\bibliography{ref}

\begin{thebibliography}{10}

\bibitem{HER}
R.~Herrmann, Fractional {C}alculus: {A}n {I}ntroduction for {P}hysicists, World
  Scientific Publ. Comp, New Jersey, 2014.

\bibitem{SAMKO}
S.~G. Samko, A.~A. Kilbas, O.~I. Marichev, Fractional {I}ntegrals and
  {D}erivatives, Theory and Applications, Gordon and Breach, Yverdon 1993
  (1993) 44.

\bibitem{KSTJ}
A.~A. Kilbas, H.~M. Srivastava, J.~J. Trujillo, Theory and {A}pplications of
  {F}ractional {D}ifferential {E}quations, Vol. 204, Elsevier, Amsterdam, 2006.

\bibitem{IP}
I.~Podlubny, Fractional {D}ifferential {E}quations, {M}athematics in {S}cience
  and {E}ngineering, {A}cademic {P}ress, {S}an {D}iego, Vol. 198, 1999.

\bibitem{ZE4}
J.~Vanterler~da C.~Sousa, E.~Capelas~de Oliveira, L.~A. Magna, Fractional
  calculus and the {ESR} test, AIMS Mathematics 2~(4) (2017) 692--705.

\bibitem{ZE5}
J.~Vanterler~da C.~Sousa, M.~N.~N. dos Santos, L.~A. Magna, E.~Capelas~de
  Oliveira, Validation of a fractional model for erythrocyte sedimentation
  rate, arXiv:1802.04340, (2018).

\bibitem{esto1}
S.~Abbas, M.~Benchohra, J.~E. Lagreg, A.~Alsaedi, Y.~Zhou, Existence and {U}lam
  stability for fractional differential equations of {H}ilfer-{H}adamard type,
  Adv. in Diff. Equa. 2017~(1) (2017) 180.

\bibitem{gronwall}
J.~Vanterler~da C.~Sousa, E.~{Capelas}~de Oliveira, A {G}ronwall inequality and
  the {C}auchy-type problem by means of $\psi$-{H}ilfer operator,
  arXiv:1709.03634 [math.CA], (2017).

\bibitem{almeida}
R.~Almeida, Fractional differential equations with mixed boundary conditions,
  Bull. Malaysian Math. Sci. Soc. (2017) 1--11.

\bibitem{cong}
N.~D. Cong, H.~T. Tuan, Existence, uniqueness, and exponential boundedness of
  global solutions to delay fractional differential equations, Mediterr. J.
  Math. 14~(5) (2017) 193.

\bibitem{d1}
Z.~Ouyang, Existence and uniqueness of the solutions for a class of nonlinear
  fractional order partial differential equations with delay, Comput. Math.
  with Appl. 61~(4) (2011) 860--870.

\bibitem{i5}
M.~A. Darwish, A.~A. El-Bary, Existence of fractional integral equation with
  hysteresis, Appl. Math. and Comput. 176~(2) (2006) 684--687.

\bibitem{d3}
H.~Gou, B.~Li, Local and global existence of mild solution to impulsive
  fractional semilinear integro-differential equation with noncompact
  semigroup, Commun. Nonlinear Sci. Numer. Simulat. 42 (2017) 204--214.

\bibitem{i7}
M.~A. Darwish, On existence and asymptotic behavior of solutions of a
  fractional integral equation, Applicable Analysis 88~(2) (2009) 169--181.

\bibitem{d5}
M.~Fec, Y.~Zhou, J.~Wang, On the concept and existence of solution for
  impulsive fractional differential equations, Commun. Nonlinear Sci. Numer.
  Simulat. 17~(7) (2012) 3050--3060.

\bibitem{d6}
Z.~Liu, X.~Li, Existence and uniqueness of solutions for the nonlinear
  impulsive fractional differential equations, Commun. Nonlinear Sci. Numer.
  Simulat. 18~(6) (2013) 1362--1373.

\bibitem{d7}
M.~M. Matar, J.~J. Trujillo, Existence of local solutions for differential
  equations with arbitrary fractional order, Arabian J. Math. 5~(4) (2016)
  215--224.

\bibitem{d8}
B.~Ahmad, J.~J. Nieto, A.~Alsaedi, Existence and uniqueness of solutions for
  nonlinear fractional differential equations with non-separated type integral
  boundary conditions, Acta Math. Sci. 31~(6) (2011) 2122--2130.

\bibitem{id1}
A.~Aghajani, Y.~Jalilian, J.~J. Trujillo, On the existence of solutions of
  fractional integro-differential equations, Frac. Cal. and Appl. Anal. 15~(1)
  (2012) 44--69.

\bibitem{d10}
H.~Jafari, H.~K. Jassim, M.~Al~Qurashi, D.~Baleanu, On the existence and
  uniqueness of solutions for local fractional differential equations, Entropy
  18~(11) (2016) 420.

\bibitem{id6}
K.~Balachandran, S.~Kiruthika, J.~J. Trujillo, Existence results for fractional
  impulsive integrodifferential equations in {B}anach spaces, Commun. Nonlinear
  Sci. Numer. Simulat. 16~(4) (2011) 1970--1977.

\bibitem{d12}
B.~Ahmad, J.~J. Nieto, J.~Pimentel, Some boundary value problems of fractional
  differential equations and inclusions, Comput. Math. Appl. 62~(3) (2011)
  1238--1250.

\bibitem{d13}
M.~Benchohra, J.~R.~Graef, S.~Hamani, Existence results for boundary value
  problems with non-linear fractional differential equations, Applicable
  Analysis 87~(7) (2008) 851--863.

\bibitem{i1}T.~Blaszczyk, J.~Siedlecki, An approximation of the fractional integrals
  using quadratic interpolation, J. Appl. Math. and Comput. Mechanics 13~(4)
  (2014) 13--18.

\bibitem{i2}
T.~Blaszczyk, J.~Siedlecki, M.~Ciesielski, Numerical algorithms for
  approximation of fractional integral operators based on quadratic
  interpolation, Math. Meth. Appl. Sci. 41~(9) (2018) 3345--3355.

\bibitem{i3}
A.~Atangana, N.~Bildik, Existence and numerical solution of the {V}olterra
  fractional integral equations of the second kind, Math. Probl. Eng. 2013
  (2013) 1--12.

\bibitem{i4}
S.~Micula, An iterative numerical method for fractional integral equations of
  the second kind, J. Comput. and Appl. Math. 339 (2018) 124--133.

\bibitem{i6}
R.~Agarwal, S.~Jain, R.~P. Agarwal, Solution of fractional {V}olterra integral
  equation and non-homogeneous time fractional heat equation using integral
  transform of pathway type, Progr. Fract. Different. Appl 1 (2015) 145--155.

\bibitem{i8}
S.~Abbas, M.~Benchohra, Existence and attractivity for fractional order
  integral equations in {F}r{\'e}chet spaces, Discussiones Mathematicae,
  Differential Inclusions, Control and Optimization 33~(1) (2013) 47--63.

\bibitem{i9}
M.~Benchohra, M.~A. Darwish, Existence and uniqueness theorem for fuzzy
  integral equation of fractional order, Commun. in Appl. Anal. 12~(1) (2008)
  13--22.

\bibitem{i10}
R.~P. Agarwal, S.~Arshad, D.~O’Regan, V.~Lupulescu, Fuzzy fractional integral
  equations under compactness type condition, Fract. Calc. and Appl. Anal.
  15~(4) (2012) 572--590.

\bibitem{i11}
M.~T. Malinowski, Random fuzzy fractional integral equations--theoretical
  foundations, Fuzzy Sets and Systems 265 (2015) 39--62.

\bibitem{id2}
A.~Anguraj, P.~Karthikeyan, M.~Rivero, J.~J. Trujillo, On new existence results
  for fractional integro-differential equations with impulsive and integral
  conditions, Comput. Math. Appl. 66~(12) (2014) 2587--2594.

\bibitem{id3}
Y.~Jalilian, M.~Ghasemi, On the solutions of a nonlinear fractional
  integro-differential equation of pantograph type, Mediterr. J. Math. 14~(5)
  (2017) 194.

\bibitem{id4}
B.~Ahmad, J.~J. Nieto, Boundary value problems for a class of sequential
  integrodifferential equations of fractional order, J. Function Spaces and
  Appl. 2013.

\bibitem{id5}
K.~Kathikeyan, Existence and uniqueness results for boundary value problems of
  higher order fractional integro-differential equations involving
  {G}ronwall’s inequality in {B}anach spaces, Acta Math. Sci. 33~(3) (2013)
  758--772.

\bibitem{id7}
K.~Balachandran, N.~Annapoorani, Existence results for impulsive neutral
  evolution integrodifferential equations with infinite delay, Nonl. Analysis
  3~(4) (2009) 674--684.

\bibitem{id8}
H.~Gou, B.~Li, Local and global existence of mild solution to impulsive
  fractional semilinear integro-differential equation with noncompact
  semigroup, Commun. Nonlinear Sci. Numer. Simulat. 42 (2017) 204--214.

\bibitem{id9}
K.~Balachandran, S.~Kiruthika, J.~J. Trujillo, Remark on the existence results
  for fractional impulsive integrodifferential equations in {B}anach spaces,
  Commun. Nonlinear Sci. Numer. Simulat. 17~(6) (2012) 2244--2247.

\bibitem{id10}
B.~Wu, S.~Wu, Existence and uniqueness of an inverse source problem for a
  fractional integrodifferential equation, Comput. Math. Appl. 68~(10) (2014)
  1123--1136.

\bibitem{id11}
K.~Li, J.~Jia, Existence and uniqueness of mild solutions for abstract delay
  fractional differential equations, Comput. Math. Appl. 62~(3) (2011)
  1398--1404.

\bibitem{id12}
F.~Li, J.~Liang, H.-K. Xu, Existence of mild solutions for fractional
  integrodifferential equations of {S}obolev type with nonlocal conditions, J.
  Math. Anal. and Appl. 391~(2) (2012) 510--525.

\bibitem{ZE1}
J.~Vanterler~da C.~Sousa, E.~Capelas~de Oliveira, On the $\psi$-{H}ilfer
  fractional derivative, Commun. Nonlinear Sci. Numer. Simulat. 60 (2018)
  72--91.

\bibitem{ZE3}
J.~Vanterler~da C.~Sousa, E.~Capelas~de Oliveira, On the stability of a
  hyperbolic fractional partial differential equation, arXiv:1805.05546,
  (2018).

\bibitem{pach}
B.~G. Pachpatte, On certain {V}olterra integral and integrodifferential
  equations, Facta. Univ.(Nis) Ser. Math. Infor. 23 (2008) 1--12.

\end{thebibliography}
\bibliographystyle{plain}

\end{document}